\documentclass[dvips,aap,noinfoline]{imsart}

\RequirePackage[OT1]{fontenc}
\RequirePackage{amsthm,amsmath}
\RequirePackage[numbers]{natbib}
\RequirePackage[colorlinks,citecolor=blue,urlcolor=blue]{hyperref}
\RequirePackage{hypernat}



\usepackage{amsfonts, amssymb}
\usepackage{graphics}
\usepackage{epsfig}
\usepackage{psfrag}
\usepackage{subfigure}
\usepackage[rightcaption]{sidecap}

\startlocaldefs
\newcommand{\R}{\ensuremath{\mathbb{R}}}

\newcommand{\C}{\ensuremath{\mathcal{C}}}

\newcommand{\E}{\ensuremath{\mathbb{E}}}

\newcommand{\W}[2]{\ensuremath{W^{(#1)}(#2)}}
\newcommand{\dW}[2]{\ensuremath{W^{(#1)\prime}(#2)}}
\newcommand{\Z}[2]{\ensuremath{Z^{(#1)}(#2)}}
\newcommand{\dZ}[2]{\ensuremath{Z^{(#1)\prime}(#2)}}
\newcommand{\ddZ}[2]{\ensuremath{Z^{(#1)\prime\prime}(#2)}}
\newcommand{\Y}[5]{\ensuremath{\frac{Z^{(#1)}(#2)W^{(#1)\prime}(#3)}{W^{(#1)}(#4)}-#1W^{(#1)}(#5)}}

\theoremstyle{plain}
\newtheorem{thm}{Theorem}[section]
\newtheorem{lem}[thm]{Lemma}


\newtheorem{rem}[thm]{Remark}
\endlocaldefs

\begin{document}


\renewcommand{\P}{\ensuremath{\mathbb{P}}}

\psfrag{x}{\tiny $x$}
\psfrag{s}{\tiny $s$}
\psfrag{O1}{\tiny $0$}
\psfrag{O2}{\tiny $\log(K)$}
\psfrag{k}{\tiny $k^*$}
\psfrag{k1}{\tiny $k^*$}
\psfrag{k2}{\tiny $\log(K)$}
\psfrag{D}{\tiny $D^*$}
\psfrag{H}{\tiny $H$}
\psfrag{Iso}{\tiny $H\mapsto f(H)$}
\psfrag{CI}{\tiny $C^*_I$}
\psfrag{CII}{\tiny $C^*_{II}$}
\psfrag{Sol}{\tiny $g(s)$}
\psfrag{eta}{\tiny $\eta$}
\psfrag{k4}{\tiny $\beta$}
\psfrag{beta}{\tiny $\beta$}
\psfrag{e}{\tiny $\epsilon$}
\psfrag{g_e}{\tiny $g_\epsilon(s)$}
\psfrag{g_i}{\tiny $g_\infty(s)$}

\begin{frontmatter}
\title{A Capped  Optimal Stopping Problem for the Maximum Process}
\runtitle{Optimal stopping for the maximum process}
\author{\fnms{Andreas} \snm{Kyprianou}\ead[label=e1]{a.kyprianou@bath.ac.uk}}
\and
\author{\fnms{Curdin} \snm{Ott}\ead[label=e2]{C.Ott@bath.ac.uk}}
\affiliation{University of Bath}

\begin{abstract}
This paper concerns an optimal stopping problem driven by the running maximum of a spectrally negative L\'evy process $X$. More precisely, we are interested in capped versions of the American lookback optimal stopping problem~\cite{guo_shepp,pedersen, gapeev}, which has its origins in mathematical finance, and provide semi-explicit solutions in terms of scale functions. The optimal stopping boundary is characterised by an ordinary first-order differential equation involving scale functions and, in particular, changes according to the path variation of $X$. Furthermore, we will link these capped problems to Peskir's maximality principle~\cite{maximality_principle}.
\end{abstract}

\address{Mathematical Sciences\\ University of Bath\\ Claverton Down\\ Bath BA2 7AY\\ United Kingdom\\\printead{e1}}
\address{Mathematical Sciences\\ University of Bath\\ Claverton Down\\ Bath BA2 7AY\\ United Kingdom\\\printead{e2}}

\begin{keyword}[class=AMS]
\kwd[Primary ]{60G40}
\kwd[; secondary ]{60G51, 60J75}
\end{keyword}

\begin{keyword}
\kwd{Optimal stopping}
\kwd{optimal stopping boundary}
\kwd{principle of smooth fit}
\kwd{principle of continuous fit}
\kwd{L\'evy processes}
\kwd{scale functions}
\end{keyword}

\end{frontmatter}

\section{Introduction}
Let $X=\{X_t:t\geq 0\}$ be a spectrally negative L\'evy process defined on a filtered probability space $(\Omega,\mathcal{F},\mathbb{F}=\{\mathcal{F}_t:t\geq 0\},\P)$ satisfying the natural conditions (cf.~p.39, Section 1.3 of~\cite{bichteler}). For $x\in\R$, denote by $\P_x$ the probability measure under which $X$ starts at $x$ and for simplicity write $\P_0=\P$. We associate with $X$ the maximum process \mbox{$\overline X=\{\overline X_t:t\geq 0\}$} where $\overline X_t:=s\vee\sup_{0\leq u\leq t} X_u$ for $t\geq 0,s\geq x$. The law under which $(X,\overline X)$ starts at $(x,s)$ is denoted by $\P_{x,s}$.\\
\indent We are interested in the following optimal stopping problem:
\begin{equation}
V_\epsilon^*(x,s):=\sup_{\tau\in\mathcal{M}}\E_{x,s}\big[e^{-q\tau}(e^{\overline X_\tau\wedge\epsilon}-K)^+\big],\label{problem1}
\end{equation}
where $q\geq0,\epsilon\in(\log(K),\infty],K\geq 0$, $(x,s)\in E$, where
\begin{equation*}
E:=\{(x,s)\in\R^2\,\vert\, x\leq s\},
\end{equation*}
and $\mathcal{M}$ is the set of all $\mathbb{F}$-stopping times (not necessarily finite). In particular, on $\{\tau=\infty\}$ we set $e^{-q\tau}(e^{\overline X_\tau\wedge\epsilon}-K)^+:=\limsup_{t\to\infty}e^{-qt}(e^{\overline X_t\wedge\epsilon}-K)^+$. This problem is, at least in the case $\epsilon=\infty$, classically associated with mathematical finance. It arises in the context of pricing American lookback options~\cite{guo_shepp,pedersen, gapeev} and its solution may be viewed as the fair price for such an option. If $\epsilon\in (\log(K),\infty)$, an analogous interpretation applies for an American lookback option whose payoff is moderated by capping it at a certain level (a fuller description will be given in Section~\ref{application}).\\  
\indent When $K=0$ and $\epsilon=\infty$,~\eqref{problem1} is known as the Shepp-Shiryaev optimal stopping  problem which was first studied by Shepp and Shiryaev~\cite{russian_option, a_new_look} for the case when $X$ is a linear Brownian motion and later by Avram, Kyprianou and Pistorius~\cite{exitproblems} for the case when $X$ is a spectrally negative L\'evy process. If $K=0$ and $\epsilon\in\R$ then the problem is a capped version of the Shepp-Shiryaev optimal stopping problem and was considered by Ott~\cite{maximum_process}. Therefore, our main focus in this paper will be the case $K>0$ which we henceforth assume.\\
\indent Our objective is to solve~\eqref{problem1} for $\epsilon=(\log(K),\infty)$ by a ``guess and verify'' technique and use this to obtain the solution to~\eqref{problem1} when $\epsilon=\infty$ via a limiting procedure. Our work extends and complements results by Guo and Shepp~\cite{guo_shepp}, Pedersen~\cite{pedersen} and Gapeev~\cite{gapeev} all of which solve~\eqref{problem1} for $\epsilon=\infty$ and $X$ a linear Brownian motion or a jump-diffusion.\\
\indent As we shall see, the general theory of optimal stopping~\cite{peskir,optimal_stopping_rules} and the principle of smooth and continuous fit~\cite{mikhalevich,pes_shir,peskir,some_remarks} (and the results in~\cite{guo_shepp,pedersen, gapeev,maximum_process}) strongly suggest that under some assumptions on $q$ and $\psi(1)$, where $\psi$ is the Laplace exponent of $X$, the optimal strategy for~\eqref{problem1} is of the form
\begin{equation}
\tau^*_\epsilon=\inf\{t\geq 0:\overline X_t-X_t\geq g_\epsilon(\overline X_t)\text{ and }\overline X_t>\log(K)\}\label{intro_stop_time}
\end{equation}
for some strictly positive solution $g_\epsilon$ of the differential equation
\begin{equation}
g_\epsilon^\prime(s)=1-\frac{e^s\Z{q}{g_\epsilon(s)}}{(e^s-K)q\W{q}{g_\epsilon(s)}}\quad\text{on $(\log(K),\epsilon)$},\label{ode_intro}
\end{equation}
where $W^{(q)}$ and $Z^{(q)}$ are the so-called $q$-scale functions associated with $X$ (see Section~\ref{preliminaries}). In particular, we will find that the optimal stopping boundary $s\mapsto s-g_\epsilon(s)$ changes shape according to the path variation of $X$. This has already been observed in~\cite{maximum_process} in the case of the capped version of the Shepp-Shiryaev optimal stopping problem. It will also turn out that our solutions exhibit a pattern suggested by Peskir's maximality principle~\cite{maximality_principle}. In fact, we will be able to give a reformulation of our main results in terms of Peskir's maximality principle.\\
\indent We conclude this section with an overview of the paper. In Section~\ref{application} we give an application of our results in the context or pricing capped American lookback options. Section~\ref{preliminaries} is an auxiliary section introducing some necessary notation, followed by Section~\ref{regimes} which gives an overview of the different parameter regimes considered. Sections~\ref{general_observation} and~\ref{candidate_solution} deal with the ``guess'' part of our ``guess and verify'' technique and our main results, which correspond to the ``verify'' part, are presented in Section~\ref{main_results}. The proofs of our main results can then be found in Section~\ref{proofs}. Finally, Section~\ref{example} provides an explicit example under the assumption that $X$ is a linear Brownian motion.

\section{Application to pricing ``capped'' American lookback options}\label{application}
The aim of this section is to give some motivation for studying~\eqref{problem1}.\\
\indent Consider a financial market consisting of a riskless bond and a risky asset. The value of the bond $B=\{B_t:t\geq 0\}$ evolves deterministically such that
\begin{equation}
B_t=B_0e^{rt},\quad B_0>0,r\geq 0,t\geq 0.\label{discount}
\end{equation}
The price of the risky asset is modeled as the exponential spectrally negative L\'eve process
\begin{equation}
S_t=S_0e^{X_t},\quad S_0>0,t\geq 0.\label{model}
\end{equation}
In order to guarantee that our model is free of arbitrage we will assume that $\psi(1)=r$. If $X_t=\mu t+\sigma W_t$, where $W=\{W_t:t\geq 0\}$ is a standard Brownian motion, we get the standard Black-Scholes model for the price of the asset. Extensive empirical research has shown that this (Gaussian) model is not capable of capturing certain features (such as skewness, asymmetry and heavy tails) which are commonly encountered in financial data, for example, returns on stocks. To accommodate for the these problems, an idea, going back to~\cite{merton}, is to replace the Brownian motion as model for the log-price by a general L\'evy process $X$ (cf.~\cite{chan}). Here we will restrict ourselves to the model where $X$ is given by a spectrally negative L\'evy process. This restriction is mainly motivated by analytical tractability. It is worth mentioning, however, that Carr and Wu~\cite{carr} as well as Madan and Schoutens~\cite{madan} have offered empirical evidence to support the case of a model in which the risky asset is driven by a spectrally negative L\'evy process for appropriate market scenarios.\\
\indent A capped American lookback option is an option which gives the holder the right to exercise at any stopping time $\tau$ yielding payouts
\begin{equation*}
L_\tau:=e^{-\alpha\tau}\bigg[\bigg(M_0\vee\sup_{0\leq u\leq\tau}S_u\wedge C\bigg)-K\bigg]^+,\quad C> M_0\geq S_0,\alpha\geq0.
\end{equation*}
The constant $M_0$ can be viewed as representing the ``starting'' maximum of the stock price (say, over some previous period $(-t_0,0])$. The constant $C$ can be interpreted as cap and moderates the payoff of the option. The value $C=\infty$ is also allowed and correspond to no moderation at all. In this case we just get a normal American lookback option. Finally, when $C=\infty$ it is necessary to choose $\alpha$ strictly positive to guarantee that it is optimal to stop in finite time and that the value is finite (cf. Theorem~\ref{main_result_1}).\\
\indent Standard theory of pricing American-type options~\cite{shiryaev} directs one to solving the optimal stopping problem
\begin{equation}
V_r(M_0,S_0,C):=B_0\sup_{\tau}\E\big[B^{-1}_\tau L_\tau]\label{motivation}
\end{equation}
where the supremum is taken over all $\mathbb{F}$-stopping times. In other words, we want to find a stopping time which optimizes the expected discounted claim. The right-hand side of~\eqref{motivation} may be rewritten as
\begin{equation*}
\sup_{\tau}\E_{x,s}\big[e^{-q\tau}(e^{\overline X_\tau\wedge\epsilon}-K)^+],
\end{equation*}
where $q=r+\alpha,x=\log(S_0),s=\log(M_0)$ and $\epsilon=\log ( C )$.

\section{Preliminaries}\label{preliminaries}
It is well known that a spectrally negative L\'evy process $X$ is characterised by its L\'evy triplet $(\gamma,\sigma,\Pi)$, where $\sigma\geq0, \gamma\in\R$ and $\Pi$ is a measure on $(-\infty,0)$ satisfying the condition $\int_{(-\infty,0)}(1\wedge x^2)\,\Pi(dx)<\infty$. By the L\'evy-It\^o decomposition, the latter may be represented in the form
\begin{equation}
X_t=\sigma B_t-\gamma t+X^{(1)}_t+X^{(2)}_t,\label{LevyItodecomposition1}
\end{equation}
where $\{B_t:t\geq 0\}$ is a standard Brownian motion, $\{X^{(1)}_t:t\geq 0\}$ is a compound Poisson process with discontinuities of magnitude bigger than or equal to one and $\{X_t^{(2)}:t\geq 0\}$ is a square integrable martingale with discontinuities of magnitude strictly smaller than one and the three processes are mutually independent. In particular, if $X$ is of bounded variation, the decomposition reduces to
\begin{equation}
X_t=\mathtt{d}t-\eta_t\label{LevyItodecomposition2}
\end{equation}
where $\mathtt{d}>0$ and $\{\eta_t:t\geq 0\}$ is a driftless subordinator. Further let
\begin{equation*}
\psi(\theta):=\E\big[e^{\theta X_1}\big],\qquad\theta\geq 0,
\end{equation*}
be the Laplace exponent of $X$ which is known to take the form
\begin{equation*}
\psi(\theta)=-\gamma\theta+\frac{1}{2}\sigma^2\theta^2+\int_{(-\infty,0)}\big(e^{\theta x}-1-\theta x1_{\{x>-1\}}\big)\,\Pi(dx).
\end{equation*}
Moreover, $\psi$ is strictly convex and infinitely differentiable and its derivative at zero characterises the asymptotic behavior of $X$. Specifically, $X$ drifts to $\pm\infty$ or oscillates according to whether $\pm\psi^\prime(0+)>0$ or, respectively, $\psi^\prime(0+)=0$.
The right-inverse of $\psi$ is defined by
\begin{equation*}
\Phi(q):=\sup\{\lambda\geq 0:\psi(\lambda)=q\}
\end{equation*}
for $q\geq 0$.

For any spectrally negative L\'evy process having \mbox{$X_0=0$} we introduce the family of martingales
\begin{equation}
\exp(cX_t-\psi(c)t),
\label{*}
\end{equation}
defined for any $c\in\R$ for which $\psi(c)=\log\E[\exp(cX_1)]<\infty$, and further the corresponding family of measures $\{\P^c\}$ with Radon-Nikodym derivatives
\begin{equation}
\frac{d\P^c}{d\P}\bigg\vert_{\mathcal{F}_t}=\exp(cX_t-\psi(c)t).\label{changeofmeasure}
\end{equation}
For all such $c$ the measure $\P^c_x$ will denote the translation of $\P^c$ under which $X_0=x$. In particular, under $\P_x^c$ the process $X$ is still a spectrally negative L\'evy process (cf. Theorem 3.9 in~\cite{kyprianou}).

A special family of functions associated with spectrally negative L\'evy processes is that of scale functions (cf.~\citep{kyprianou}) which are defined as follows. For $q\geq 0$, the $q$-scale function $W^{(q)}:\R\longrightarrow[0,\infty)$ is the unique function whose restriction to $(0,\infty)$ is continuous and has Laplace transform
\begin{equation*}
\int_0^\infty e^{-\theta x}\W{q}{x}\,dx=\frac{1}{\psi(\theta)-q},\quad\theta>\Phi(q),
\end{equation*}
and is defined to be identically zero for $x\leq 0$. Equally important is the scale function $Z^{(q)}:\R\longrightarrow[1,\infty)$ defined by
\begin{equation*}
\Z{q}{x}=1+q\int_0^x\W{q}{z}\,dz.
\end{equation*}
The passage times of $X$ below and above $k\in\R$ are denoted by
\begin{equation*}
\tau_k^-=\inf\{t>0:X_t\leq k\}\quad\text{and}\quad\tau_k^+=\inf\{t>0:X_t\geq k\}.
\end{equation*}
We will make use of the following two identities (cf.~\citep{exitproblems}). For $q\geq 0$ and $x\in(a,b)$ it holds that
\begin{eqnarray}
&&\E_x\big[e^{-q\tau^+_b}I_{\{\tau^+_b<\tau^-_a\}}\big]=\frac{\W{q}{x-a}}{\W{q}{b-a}},\label{scale1}\\
&&\E_x\big[e^{-q\tau^-_a}I_{\{\tau^+_b>\tau^-_a\}}\big]=\Z{q}{x-a}-\W{q}{x-a}\frac{\Z{q}{b-a}}{\W{q}{b-a}}.\label{scale2}
\end{eqnarray}
For each $c\geq 0$ we denote by $W_c^{(q)}$ the $q$-scale function with respect to the measure $\P^c$. A useful formula (cf.~\citep{kyprianou}) linking the scale function under different measures is given by
\begin{equation}
\W{q}{x}=e^{\Phi(q)x}W_{\Phi(q)}(x)\label{scale5}
\end{equation} 
for $q\geq 0$ and $x\geq 0$.

We conclude this section by stating some known regularity properties of scale functions (cf.~\citep{KuzKypRiv}).

\noindent\textsl{Smoothness}: For all $q\geq 0$,
\begin{equation*}\label{smoothness}
W^{(q)}\vert_{(0,\infty)}\in\begin{cases}
C^1(0,\infty),&\text{if $X$ is of bounded variation and $\Pi$ has no atoms},\\C^1(0,\infty),&\text{if $X$ is of unbounded variation and $\sigma=0$},\\C^2(0,\infty),&\text{$\sigma>0$}.
\end{cases}
\end{equation*}
\\
\textsl{Continuity at the origin:} For all $q\geq 0$,
\begin{equation}\label{continuityatorigin}
\W{q}{0+}=\begin{cases}\mathtt{d}^{-1},&\text{if $X$ is of bounded variation,}\\0,&\text{if $X$ is of unbounded variation.}
\end{cases}
\end{equation}
\\
\textsl{Right derivative at the origin:} For all $q\geq 0$,
\begin{equation}\label{derivativeatorigin}
W^{(q)\prime}_+(0+)=\begin{cases}
\frac{q+\Pi(-\infty,0)}{\mathtt{d}^2},&\text{if $\sigma=0$ and $\Pi(-\infty,0)<\infty$,}\\
\frac{2}{\sigma^2},&\text{if $\sigma>0$ or $\Pi(-\infty,0)=\infty$,}\
\end{cases}
\end{equation}
where we understand the second case to be $+\infty$ when $\sigma=0$.\\
\indent For technical reasons, we require for the rest of the paper that $W^{(q)}$ is in $C^1(0,\infty)$ (and hence $Z^{(q)}\in C^2(0,\infty)$). This is ensured by henceforth assuming  that $\Pi$ is atomless whenever $X$ is of bounded variation.

\section{The different parameter regimes}\label{regimes}
Our analysis distinguishes between the following parameter regimes.\\
\indent\textit{Main cases:}
\begin{itemize}
\item $q>0$ and $\epsilon\in(\log(K),\infty)$.
\item $q>0\vee\psi(1)$ and $\epsilon=\infty$,
\end{itemize}

\textit{Special cases:}
\begin{itemize}
\item $q=0$ and $\epsilon\in(\log(K),\infty)$,
\item $q=0$ and $\epsilon=\infty$,
\item $0<q\leq\psi(1)$ and $\epsilon=\infty$.
\end{itemize}

\section{Candidate solution for the main cases}\label{general_observation} 
The aim of this section is to derive a candidate solution to~\eqref{problem1} for the main cases via the principle of smooth and continuous fit~\cite{mikhalevich,pes_shir,peskir,some_remarks}.\\
\indent We begin by heuristically motivating a class of stopping times in which we will look for the optimal stopping time under the assumption that $q>0$ and $\epsilon\in(\log(K),\infty)$. Because $e^{-qt}(e^{\overline X_t\wedge\epsilon}-K)^+=0$ as long as $(X,\overline X)$ is in the set 
\begin{equation*}
C^*_{II}:=\{(x,s)\in E:s\leq\log(K)\},
\end{equation*}
it is intuitively clear that it is never optimal to stop the process $(X,\overline X)$ \mbox{in $C^*_{II}$}. Moreover, as the process $(X,\overline X)$ can only move upwards by climbing up the diagonal in the $(x,s)$-plane (see Fig.~\ref{intuition}), it can only leave $C^*_{II}$ through the point $(\log(K),\log(K))$. Therefore, one should not exercise until the process $(X,\overline X)$ has exceeded the point $(\log(K),\log(K))$. It is possible that this never happens as $X$ might escape to $-\infty$ before reaching level $\log(K)$. 
\begin{figure}[h]
\includegraphics[scale=0.45]{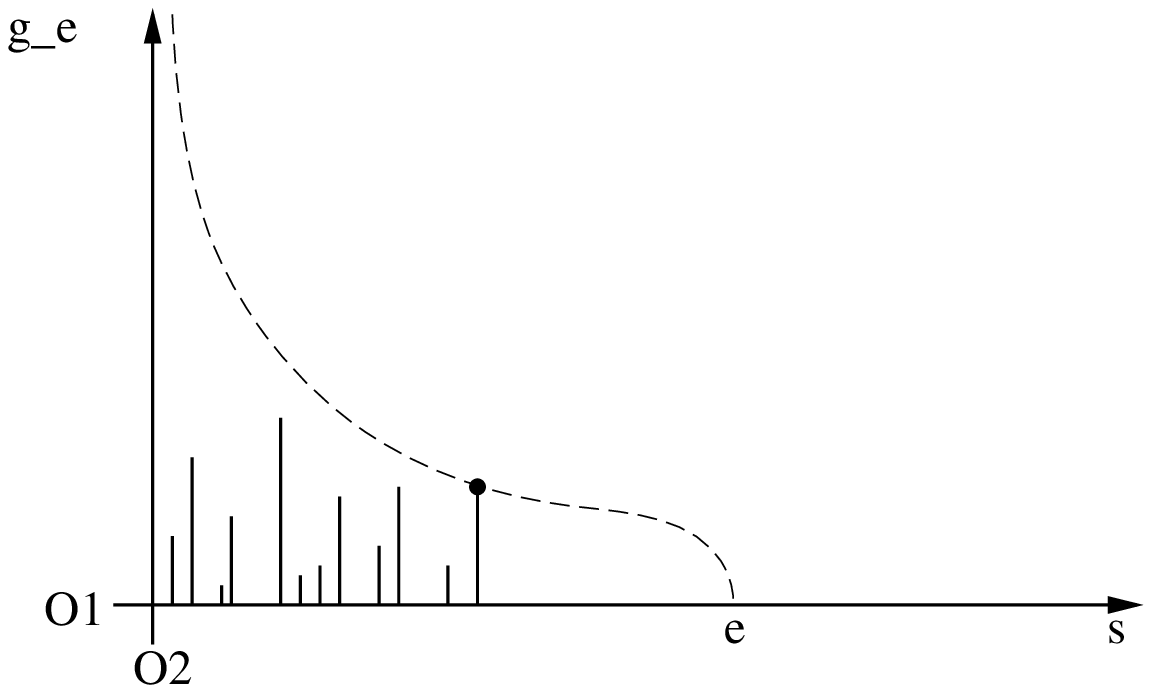}\quad\quad
\includegraphics[scale=0.45]{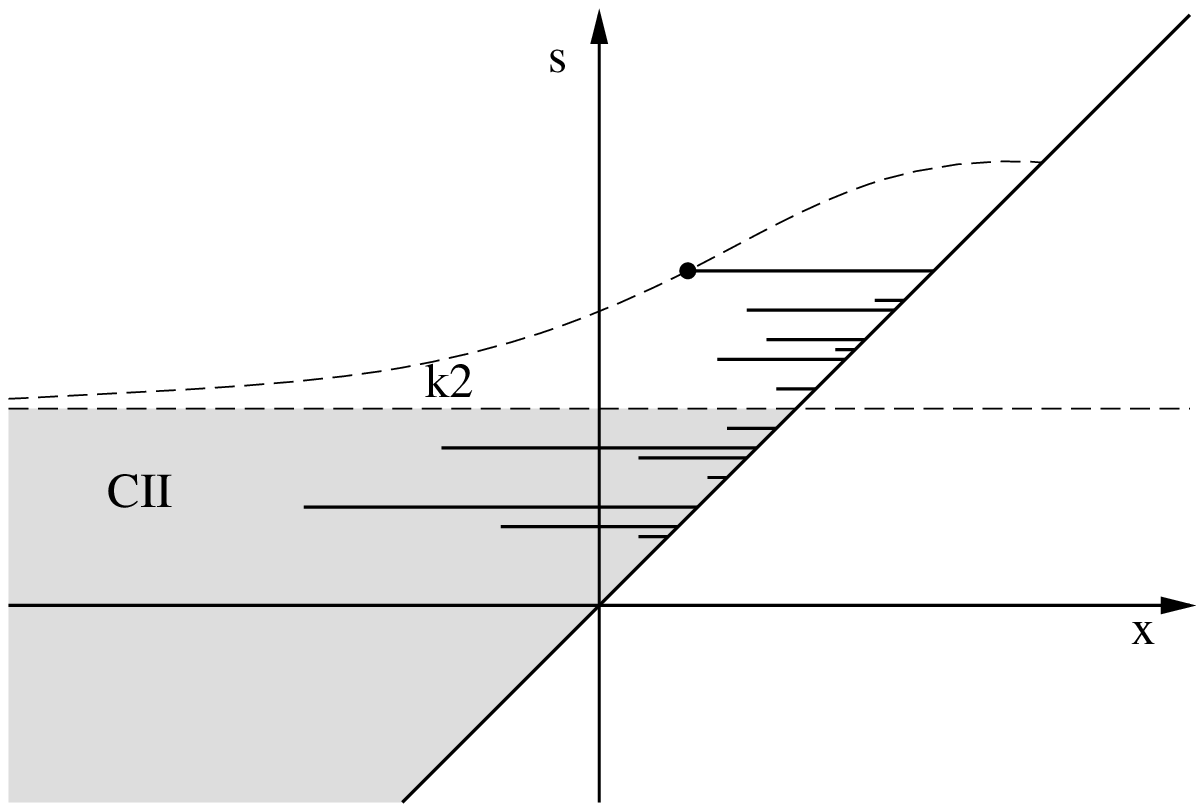}
\caption{An illustration of a possible function $g_\epsilon$ and the corresponding stopping boundary $s\mapsto s-g_\epsilon(s)$. The vertical and horizontal lines are meant to schematically indicate the trace of an excursion of $X$ away from the running maximum. The candidate optimal strategy $\tau_{g_\epsilon}$ then consists of continuing if the height of the excursion away from the running maximum $s$ does not exceed $g_\epsilon(s)$, otherwise we stop.}\label{intuition}
\end{figure}
On the other hand, if the process $(X,\overline X)$ is in $\{(x,s)\in E:s\geq\epsilon\}$, it should be stopped immediately due to the discounting as the spatial part of the payout is deterministic and fixed at $e^{\epsilon}-K$ in value. The remaining case is when $(X,\overline X)$ is in $\{(x,s)\in E:\log(K)<s<\epsilon\}$ in which case we can argue in the same way as described on p.~6, Section 3 of~\cite{maximality_principle}: 
The dynamics of the process $(X, \overline{X})$ are such that $\overline{X}$ remains constant at times when $X$ is undertaking an excursion below $\overline{X}$. During such periods the discounting in the payoff is detrimental. One should therefore not allow $X$ to drop too far below $\overline{X}$ in value as otherwise the time it will $X$ take to recover to the value of its previous maximum will prove to be costly in terms of the gain on account of exponential discounting. More specifically, given a current \mbox{value $s$}, $s\in(\log(K),\epsilon)$, of $\overline{X}$, there should be a point $g_\epsilon(s)>0$ such that if the process $X$ reaches or jumps below the value $(s-g_\epsilon(s),s)$ we should stop instantly (see Fig.~\ref{intuition}). In more mathematical terms, we expect an optimal stopping time of the form
\begin{equation}
\tau_{g_\epsilon}:=\inf\{t\geq 0:\overline X_t-X_t\geq g_\epsilon(\overline X_t)\text{ and }\overline X_t>\log(K)\}\label{expect}
\end{equation}
for some function $g_\epsilon:(\log(K),\epsilon)\rightarrow(0,\infty)$ such that $\lim_{s\uparrow\epsilon}g_\epsilon(s)=0$. This is illustrated in Fig.\ref{intuition}. For $(x,s)\in E$, we define the value function associated with $\tau_{g_\epsilon}$ by
\begin{equation}
V_{g_\epsilon}(x,s):=\E_{x,s}\big[e^{-q\tau_{g_\epsilon}}(e^{\overline X_{\tau_{g_\epsilon}}\wedge\epsilon}-K)^+\big].\label{val}
\end{equation}
Now suppose for the moment that we have chosen a function $g_\epsilon$. The strong Markov property and Theorem 3.12 of~\cite{kyprianou} then imply that, for $(x,s)\in C^*_{II}$,
\begin{eqnarray*}
V_{g_\epsilon}(x,s)&=&e^{-\Phi(q)(\log(K)-x)}\E_{\log(K),\log(K)}\big[e^{-q\tau_{g_\epsilon}}(e^{\overline X_{\tau_{g_\epsilon}}\wedge\epsilon}-K)\big]\\
&=&e^{-\Phi(q)(\log(K)-x)}\lim_{s\downarrow\log(K)}V_{g_\epsilon}(s,s).
\end{eqnarray*}
 This means that $V_{g_\epsilon}$ is determined on  $C^*_{II}$ as soon as $V_{g_\epsilon}$ is known on
\begin{equation*}
E_1:=\{(x,s)\in E:s>\log(K)\}.
\end{equation*}
This leaves us with two key questions:
\begin{itemize}
\item How should one choose $g_\epsilon$?
\item  Given $g_\epsilon$, what does $V_{g_\epsilon}(x,s)$ look like for $(x,s)\in E_1$?
\end{itemize}
These questions can be answered heuristically in the spirit of the method applied in Section 3 of~\cite{maximality_principle}, but adapted to the case when $X$ is a spectrally negative L\'evy processes (rather than a diffusion). More precisely, as we shall see in more detail in Section~\ref{candidate_solution}, the general theory of optimal stopping~\mbox{\cite{peskir, optimal_stopping_rules}} together with the principle of smooth and continuous fit~\cite{mikhalevich,pes_shir,peskir,some_remarks} suggest that $g_\epsilon$ should be solution to the ordinary differential equation
\begin{equation}
g_\epsilon^\prime(s)=1-\frac{e^s\Z{q}{g_\epsilon(s)}}{(e^s-K)q\W{q}{g_\epsilon(s)}}\quad\text{on }(\log(K),\epsilon).\label{ode_eq}
\end{equation} 
and that $V_{g_\epsilon}(x,s)=(e^{s\wedge\epsilon}-K)\Z{q}{x-s+g_\epsilon(s)}$ for $(x,s)\in E_1$. Note that there might be many solutions to~\eqref{ode_eq} without an initial/boundary condition. However, we are specifically looking for the solution satisfying $\lim_{s\uparrow\epsilon}g_\epsilon(s)=0$. Summing up, we have suggested/found a candidate stopping time $\tau_{g_\epsilon}$ and candidate value \mbox{function $V_{g_\epsilon}$.}\\
\indent As for the case $q>0\vee\psi(1)$ and $\epsilon=\infty$, one might let $\epsilon$ tend to infinity which informally yields a candidate stopping time of the form~\eqref{expect} with $g_\epsilon$ replaced with $g_\infty$, where $g_\infty$ should satisfy~\eqref{ode_eq}, but on $(\log(K),\infty)$ instead of $(\log(K),\epsilon)$. The corresponding value function $V_{g_\infty}$ is then expected to be of the form $V_{g_\infty}(x,s)=(e^s-K)\Z{q}{x-s+g_\infty(s)}$ for $(x,s)\in E_1$. If we are to identify $g_\infty$ as a solution to~\eqref{ode_eq}, we need an initial/boundary condition which in this case can be found as follows. For $s\gg K$ the payoff in~\eqref{problem1} resembles the payoff of the Shepp-Shiryaev optimal stopping problem~\cite{kyprianou,exitproblems, maximum_process} and hence we expect $s\mapsto s-g_\infty(s)$ to look similar to the optimal boundary of the Shepp-Shiryaev optimal stopping problem for $s\gg K$. Therefore, we expect that $\lim_{s\uparrow\infty}g_\infty(s)=k^*$, where $k^*>0$ is the unique root of the equation $\Z{q}{s}-q\W{q}{s}=0$ (cf.~\cite{maximum_process}).\\
\indent These heuristic arguments are made rigorous in the next section.

\section{Main results}\label{main_results}
\subsection{The different solutions of the ODE}\label{sub_ode}
In this subsection we investigate, for $q>0$, the solutions of the ordinary differential equation
\begin{equation}
g^\prime(s)=1-\frac{e^s\Z{q}{g(s)}}{(e^s-K)q\W{q}{g(s)}}\label{aa}
\end{equation}
whose graph lies in
\begin{equation*}
U:=\{(s,H)\in\R^2:s>\log(K),H>0\}.
\end{equation*}
These solutions will, as already hinted in the previous section, play an important role. But before we analyse~\eqref{aa}, recall that the requirement $\W{q}{0+}<q^{-1}$ is the same as asking that either $X$ is of unbounded variation or $X$ is of bounded variation with $\mathtt{d}>q$. Similarly, the condition $\W{q}{0+}\geq q^{-1}$ means that $X$ is of bounded variation with $0<\mathtt{d}\leq q$. Also note that $\W{q}{0+}\geq q^{-1}$ implies $q\geq\mathtt{d}>\psi(1)$.\\
\indent The existence of solutions to~\eqref{aa} and their behaviour under the different parameter regimes is summarised in the next result.

\begin{lem}\label{ode1}
Assume that $q>0$. 
 For $\epsilon\in(\log(K),\infty)$, we have the following. 
{\renewcommand{\theenumi}{\alph{enumi}}
\renewcommand{\labelenumi}{(\theenumi)}
\begin{enumerate}
\item\label{beh_1} If $q>\psi(1)$ and $\W{q}{0+}< q^{-1}$, then there exists a unique solution $g_\epsilon:(\log(K),\epsilon)\to(0,\infty)$ to~\eqref{aa} such that $\lim_{s\uparrow\epsilon}g_\epsilon(s)=0$.
\item\label{beh_2} If $\W{q}{0+}\geq q^{-1}$ (and hence $q>\psi(1)$), then there exists a unique solution $g_\epsilon:(\log(K),\epsilon\wedge\beta)\to(0,\infty)$ to~\eqref{aa} such that \mbox{$\lim_{s\uparrow\epsilon\wedge\beta}g_\epsilon(s)=0$}. Here, the constant $\beta$ is given by $\beta:=\log\big(K(1-\mathtt{d}/q)^{-1}\big)\in(0,\infty]$.
\item\label{beh_3} If $q\leq\psi(1)$, then there exists a unique solution \mbox{$g_\epsilon:(\log(K),\epsilon)\to(0,\infty)$} to~\eqref{aa} such that $\lim_{s\uparrow\epsilon}g_\epsilon(s)=0$.
\end{enumerate}}
For $\epsilon=\infty$, we have in particular: 
{\renewcommand{\theenumi}{\alph{enumi}}
\renewcommand{\labelenumi}{(\theenumi)}
\begin{enumerate}
\setcounter{enumi}{3}
\item\label{beh_4} If $q>\psi(1)$ and $\W{q}{0+}<q^{-1}$, then there exists a unique solution $g_\infty:(\log(K),\infty)\to(0,\infty)$ to~\eqref{aa} such that $\lim_{s\uparrow\infty}g_\infty(s)=k^*$, where $k^*\in(0,\infty)$ is the unique root of $\Z{q}{s}-q\W{q}{s}=0$.
\item\label{beh_5} If $\W{q}{0+}\geq q^{-1}$ (and hence $q>\psi(1)$), then there exists a unique solution $g_\infty:(\log(K),\beta)\to(0,\infty)$ to~\eqref{aa} such that $\lim_{s\uparrow\beta}g_\infty(s)=0$. The constant $\beta$ is as in~\eqref{beh_2}.
\end{enumerate}}
\noindent Moreover, all the solutions mentioned in~\eqref{beh_1}--\eqref{beh_5} tend to $+\infty$ as \mbox{$s\downarrow\log(K)$}. Finally, note that if $\beta\leq\epsilon$ then the solutions in~\eqref{beh_2} and~\eqref{beh_5} coincide.

Then the qualitative behaviour of the solutions of~\eqref{aa} is displayed in Fig.~\ref{all_pic_1}-\ref{all_pic_3}.
\end{lem}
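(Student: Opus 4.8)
The plan is to read \eqref{aa} as the scalar equation $g'(s)=1-c(s)f(g(s))$ with $c(s):=e^s/(e^s-K)$ and $f(H):=\Z{q}{H}/(q\W{q}{H})$, and to analyse the associated planar vector field $F(s,H):=1-c(s)f(H)$ on $U$; here $c$ is strictly decreasing on $(\log(K),\infty)$ from $+\infty$ to $1$. The first step is to record the properties of $f$ that govern everything. From the large-argument asymptotics $\W{q}{H}\sim e^{\Phi(q)H}/\psi'(\Phi(q))$ together with $\Z{q}{H}=1+q\int_0^H\W{q}{z}\,dz$ one gets $f(H)\to 1/\Phi(q)$ as $H\to\infty$; and from the continuity and right-derivative of $\W{q}{}$ at the origin recalled in Section~\ref{preliminaries} one gets $f(0+)=+\infty$ if $X$ has unbounded variation and $f(0+)=\mathtt{d}/q$ if $X$ has bounded variation. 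Since $f(H)=1$ exactly when $\Z{q}{H}-q\W{q}{H}=0$, the sign of $\Z{q}{}-q\W{q}{}$ controls where $f\gtrless 1$: when $q>\psi(1)$ and $\W{q}{0+}<q^{-1}$ this function is positive then negative with unique root $k^*$, so $f>1$ on $(0,k^*)$ and $f<1$ on $(k^*,\infty)$ with $f\downarrow 1/\Phi(q)<1$; whereas when $q\le\psi(1)$ it stays positive, so $f>1$ throughout and the field has no interior nullcline. I would also establish that $f$ is strictly decreasing on the branch $(k^*,\infty)$, which makes the nullcline $\{f(H)=c(s)^{-1}\}$ a single increasing graph $s\mapsto H^*(s)$ on its domain.

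Away from the singular lines $\{s=\log(K)\}$ and (in the unbounded-variation case) $\{H=0\}$, the field $F$ is $C^1$, hence locally Lipschitz, so Picard--Lindel\"of gives local existence and uniqueness and every solution extends until it leaves $U$. I would construct each required solution by integrating backwards from the right-hand endpoint. The endpoint behaviour is dictated by the sign of $F$ at $H=0$: since $F(s,0+)=1-c(s)f(0+)$, in the unbounded-variation case $F(s,0+)=-\infty$ for every $s$, whereas in the bounded-variation case $F(s,0+)=1-c(s)\mathtt{d}/q$, which is negative precisely for $s<\beta$ and vanishes at $s=\beta$ because $c(\beta)=q/\mathtt{d}$. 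This identifies $\beta$ as the level at which the origin becomes an equilibrium of the field and explains the cap at $\epsilon\wedge\beta$ in \eqref{beh_2} and at $\beta$ in \eqref{beh_5}; when $\W{q}{0+}<q^{-1}$ one has $\mathtt{d}>q$, so $\beta$ is not real and $F(\cdot,0+)<0$ throughout, and no cap appears.

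For existence and uniqueness of the trajectory emanating from the corner I would treat the two regularity regimes separately. In the bounded-variation regime $F$ is continuous up to the endpoint and strictly negative there, so a standard initial value problem at $(\epsilon\wedge\beta,0)$ has a unique local solution decreasing to $0$. In the unbounded-variation regime the field is singular, and using $f(H)\sim 1/(q\dW{q}{0+}H)$ one finds $g\,g'\sim-\mathrm{const}$, so the solution leaves the corner like $g(s)\sim\sqrt{2\,\mathrm{const}\,(\epsilon-s)}$; existence and uniqueness of this square-root solution I would obtain by a contraction argument after the desingularising substitution $u=g^2$. Once the local solution is in hand I continue it to the left and trap it strictly below the nullcline: it starts in the region $\{F<0\}$, and since $H^*$ increases as $s$ decreases the trajectory cannot cross it, so $F<0$ persists, $g$ is strictly decreasing in $s$, and the solution stays in $U$. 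Finally, for $s$ so small that the nullcline has disappeared, $F<0$ uniformly and the estimate $g'(s)\approx -f(g)\,K/(e^s-K)$ has the non-integrable singularity $1/(s-\log(K))$, which forces $g\uparrow+\infty$ as $s\downarrow\log(K)$ and yields the final assertion of the lemma.

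It remains to handle the endpoint at $s=\infty$ in \eqref{beh_4}. As $s\to\infty$ one has $c(s)\to 1$ and $H^*(s)\downarrow k^*$, so $k^*$ plays the role of an equilibrium at infinity. I would obtain $g_\infty$ as the monotone limit of the finite-horizon solutions $g_\epsilon$ as $\epsilon\uparrow\infty$, monotonicity in $\epsilon$ following from the comparison principle (enlarging $\epsilon$ moves the endpoint zero to the right and, by non-crossing of solutions, raises the whole curve), and then identify $g_\infty(\infty)=k^*$ by squeezing $g_\infty$ between $k^*$ and $H^*(s)$ for large $s$; the cap case \eqref{beh_5} is literally \eqref{beh_2} with $\epsilon=\infty$, whence the coincidence noted at the end of the statement. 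The main obstacle I anticipate is precisely the two singular boundary analyses: proving that exactly one trajectory emerges from the singular corner in the unbounded-variation case, and that exactly one trajectory converges to the non-hyperbolic equilibrium $k^*$ at infinity, since at both places Picard--Lindel\"of does not apply directly and uniqueness must be extracted from the monotone and contraction structure of the desingularised equation together with the ordering of solutions.
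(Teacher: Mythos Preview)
Your overall phase-plane picture is correct and matches the paper's, but your construction runs in the opposite direction. The paper never solves a boundary value problem at $H=0$; instead it parameterises every maximal solution by interior data (where it crosses the horizontal line $H=k^*$, or where it crosses the $0$-isocline), splits the solution set into the two classes $\mathcal{S}^-$ (those hitting zero) and $\mathcal{S}^+$ (those crossing the isocline), and uses the monotonicity of $F$ in $H$ to squeeze out a unique separating solution $g_\infty$. The solution $g_\epsilon$ is then obtained by an identical squeeze argument on $\mathcal{S}^-$, showing that the map ``crossing point of $H=k^*$'' $\mapsto$ ``zero-hitting point'' is a bijection onto $(\log K,\infty)$. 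This never touches the singular boundary $H=0$ and works uniformly in all regularity regimes.

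Your route---integrate backwards from $(\epsilon,0)$, then let $\epsilon\uparrow\infty$---is natural, but the corner analysis has a genuine gap. The asymptotic $f(H)\sim 1/(q\dW{q}{0+}H)$ and the substitution $u=g^2$ rely on $\dW{q}{0+}$ being finite and positive, which holds only when $\sigma>0$; for $X$ of unbounded variation with $\sigma=0$ one has $\dW{q}{0+}=+\infty$ and the square-root desingularisation collapses. Likewise, in the bounded-variation infinite-activity case $\dW{q}{0+}=+\infty$, so $F$ is continuous but not Lipschitz at $H=0$, and ``a standard initial value problem'' at $(\epsilon\wedge\beta,0)$ gives existence by Peano but not uniqueness. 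You will need either a finer local analysis adapted to the actual behaviour of $W^{(q)}$ near zero, or else fall back on the paper's interior ordering argument---which, incidentally, is what the paper does and what your own closing remark about ``the ordering of solutions'' is pointing towards. (A minor inconsistency: you first call the nullcline $s\mapsto H^*(s)$ increasing, then later say $H^*$ increases as $s$ decreases; the latter is correct.)
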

We will henceforth use the following convention: If a solution to~\eqref{aa} is not defined for all $s\in(\log(K),\infty)$, we extend it to $(\log(K),\infty)$ by setting it equal to zero wherever it is not defined (typically $s\geq\epsilon$).

\begin{SCfigure}[][h]
\includegraphics[scale=0.55]{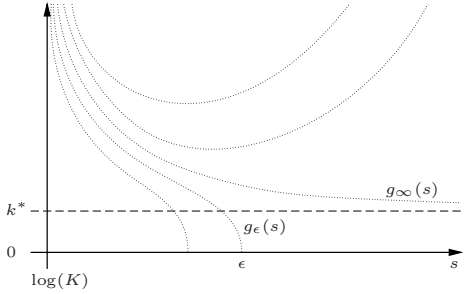}\label{all_pic_1}
\caption{A schematic illustration of the solutions of~\eqref{aa} when $q>\psi(1)$ and $\W{q}{0+}=0$. If \mbox{$q>\psi(1)$} and $\W{q}{0+}\in(0,q^{-1})$, then the solutions look the same except that they hit zero with finite gradient (since \mbox{$\W{q}{0+}>0$}).}
\end{SCfigure}
\begin{SCfigure}[][h]
\includegraphics[scale=0.55]{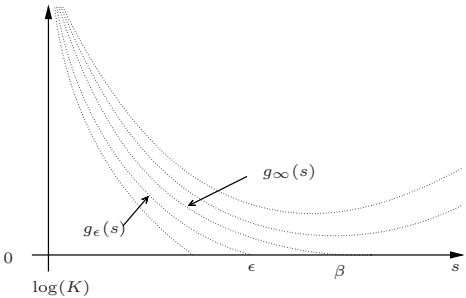}\label{all_pic_2}
\caption{A schematic illustration of the solutions of~\eqref{aa} when $\W{q}{0+}\geq q^{-1}$ and $\epsilon<\beta$.}
\end{SCfigure}
\begin{SCfigure}[][h]
\includegraphics[scale=0.55]{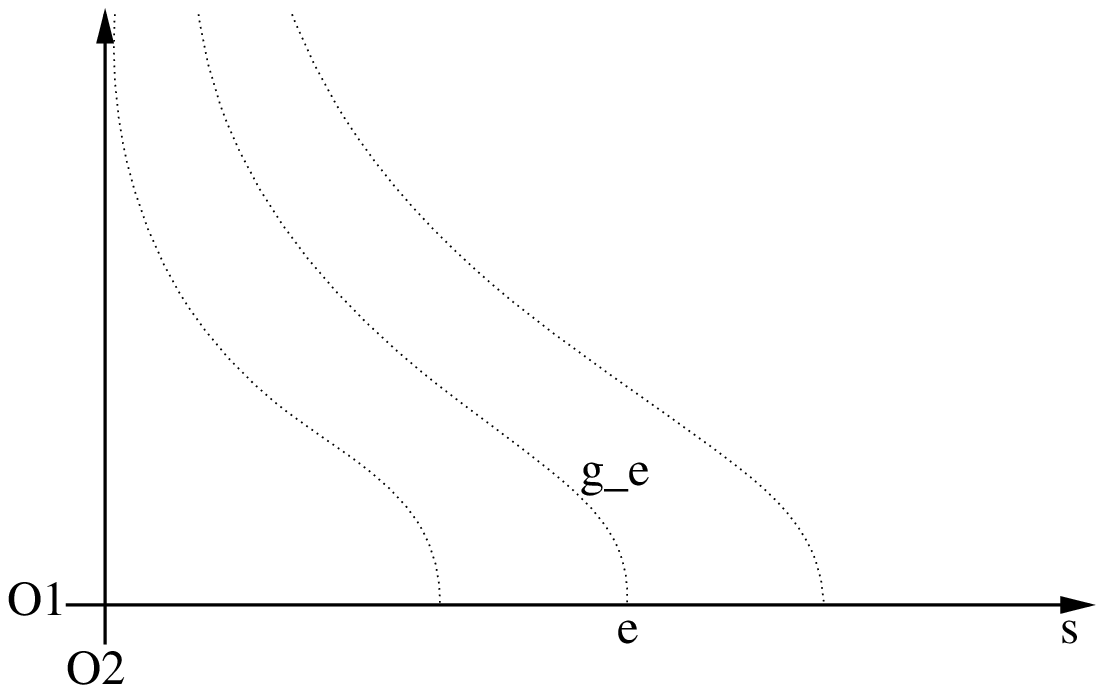}\label{all_pic_3}
\caption{A schematic illustration of the solutions of~\eqref{aa} when $q\leq\psi(1)$ and $\W{q}{0+}=0$. If \mbox{$q\leq\psi(1)$} and $\W{q}{0+}\in(0,q^{-1})$, then the solutions look the same except that they hit zero with finite gradient (since \mbox{$\W{q}{0+}>0$}).}
\end{SCfigure}

\subsection{Verification of the case $q>0$ and $\epsilon\in(\log(K),\infty)$}\label{two}
We are now in a position to state our first main result.
\begin{thm}\label{main_result}
Suppose that $q>0$ and $\epsilon\in(\log(K),\infty)$. Then the solution to~\eqref{problem1} is given by
\begin{equation}
V_\epsilon^*(x,s)=\begin{cases}(e^{s\wedge\epsilon}-K)\Z{q}{x-s+g_\epsilon(s)},&(x,s)\in E_1,\label{value_function}\\
e^{-\Phi(q)(\log(K)-x)}A_\epsilon,&(x,s)\in C^*_{II},\end{cases}
\end{equation}
with value $A_\epsilon\in(0,\infty)$ given by
\begin{equation*}
A_\epsilon:=\E_{\log(K),\log(K)}\big[e^{-q\tau^*_\epsilon}(e^{\overline X_{\tau^*_\epsilon}\wedge \epsilon}-K)\big]=\lim_{s\downarrow\log(K)}(e^s-K)\Z{q}{g_\epsilon(s)},
\end{equation*}
and optimal stopping time
\begin{equation}
\tau_\epsilon^*=\inf\{t\geq 0\,:\,\overline X_t-X_t\geq g_\epsilon(\overline X_t)\text{ and }\overline X_t>\log(K)\},\label{opt_st_time}
\end{equation}
where $g_\epsilon$ is given in Lemma~\ref{ode1}. Moreover,
\begin{equation*}
\P_{x,s}[\tau_\epsilon^*<\infty]=\begin{cases}1,&\text{if }\psi^\prime(0+)\geq 0,\\
e^{-\Phi(q)(\log(K)-x)},&\text{if }\psi^\prime(0+)<0.
\end{cases}
\end{equation*}
\end{thm}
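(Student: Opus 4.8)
The plan is to carry out the ``verify'' half of the guess-and-verify scheme. Write $V:=V_{g_\epsilon}$ for the candidate from~\eqref{val}, with the closed form $V(x,s)=(e^{s\wedge\epsilon}-K)\Z{q}{x-s+g_\epsilon(s)}$ on $E_1$ and $V(x,s)=e^{-\Phi(q)(\log(K)-x)}A_\epsilon$ on $C^*_{II}$, and establish the two inequalities $V^*_\epsilon\leq V$ and $V^*_\epsilon\geq V$ separately. The domination $V\geq(e^{s\wedge\epsilon}-K)^+$ of the gain function is immediate on $E_1$ from $\Z{q}{\cdot}\geq 1$ and $e^{s}>K$ for $s>\log(K)$, and trivial on $C^*_{II}$, where the gain vanishes while $A_\epsilon>0$. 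Coupled with a supermartingale property this gives the upper bound, while exhibiting $\tau^*_\epsilon$ as the stopping time that turns the relevant process into a martingale gives the matching lower bound and hence optimality.

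The heart of the argument is to show that $\{e^{-qt}V(X_t,\overline X_t):t\geq0\}$ is a nonnegative supermartingale which is, moreover, a martingale up to $\tau^*_\epsilon$. I would apply the change-of-variables / It\^o--Meyer formula to $e^{-qt}V(X_t,\overline X_t)$, producing three contributions: a Lebesgue drift $e^{-qt}(\mathcal L-q)V\,dt$, where $\mathcal L$ is the generator of $X$; a term $e^{-qt}\partial_sV(X_t,\overline X_t)\,d\overline X_t$ supported on the diagonal $\{X_t=\overline X_t\}$; and a local martingale. For the drift, in the continuation region $\{x-s+g_\epsilon(s)>0\}$ the function $V$ is, in $x$, a multiple of $\Z{q}{\cdot}$, so $(\mathcal L-q)V=0$ because $Z^{(q)}$ is space-$q$-harmonic for $X$ (its value $\Z{q}{\cdot}\equiv1$ on $(-\infty,0]$ automatically absorbs the spectrally negative jumps that land in the stopping region); in the stopping region $V$ is constant in $x$ and equals $e^{s\wedge\epsilon}-K$, so $(\mathcal L-q)V=-q(e^{s\wedge\epsilon}-K)\leq0$. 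For the diagonal term one computes, using $\dZ{q}{\cdot}=q\W{q}{\cdot}$, that $\partial_sV(s,s)=e^s\Z{q}{g_\epsilon(s)}+(e^s-K)q\W{q}{g_\epsilon(s)}(g_\epsilon^\prime(s)-1)$, and substituting the ODE~\eqref{aa} makes this vanish identically for $s\in(\log(K),\epsilon)$ (and it is trivially zero for $s\geq\epsilon$). Thus~\eqref{aa} is precisely the normal-reflection condition $\partial_sV(s,s)=0$, the diagonal term drops out, and since $V\geq0$ the nonnegative local supermartingale is a genuine supermartingale.

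A point requiring care is the smoothness of $V$ across the stopping boundary $\{x=s-g_\epsilon(s)\}$. When $X$ is of unbounded variation one has $\W{q}{0+}=0$, so $V$ is $C^1$ there (smooth fit) and, with $\Z{q}{\cdot}\in C^2$ when $\sigma>0$, the formula applies directly; when $X$ is of bounded variation, $\W{q}{0+}=\mathtt{d}^{-1}>0$ leaves a convex kink (only continuous fit), but the boundary is then crossed solely by the downward jumps of $X$, so no local-time term on the curve is created and the ordinary jump change-of-variables formula still applies. Granting the supermartingale property, an optional-stopping / Fatou argument (using $V\geq0$ and the prescribed convention for the payoff on $\{\tau=\infty\}$) yields $\E_{x,s}[e^{-q\tau}(e^{\overline X_\tau\wedge\epsilon}-K)^+]\leq V(x,s)$ for every $\tau\in\mathcal M$, hence $V^*_\epsilon\leq V$. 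For the reverse inequality I would check that stopping at $\tau^*_\epsilon$ makes the stopped process a true martingale, so that, after letting $t\to\infty$ and discarding the null contribution of $\{\tau^*_\epsilon=\infty\}$, one obtains $\E_{x,s}[e^{-q\tau^*_\epsilon}(e^{\overline X_{\tau^*_\epsilon}\wedge\epsilon}-K)]=V(x,s)$. On $C^*_{II}$ this simultaneously pins down the stated form: by the strong Markov property at $\tau^+_{\log(K)}$, together with $\E_x[e^{-q\tau^+_{\log(K)}}]=e^{-\Phi(q)(\log(K)-x)}$ and creeping of $X$ at $\log(K)$, one gets $V(x,s)=e^{-\Phi(q)(\log(K)-x)}\lim_{s\downarrow\log(K)}(e^s-K)\Z{q}{g_\epsilon(s)}$, so existence and finiteness of $A_\epsilon$ reduce to controlling this indeterminate limit via the behaviour of $g_\epsilon$ near $\log(K)$ supplied by Lemma~\ref{ode1}.

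For the finiteness of $\tau^*_\epsilon$ I would argue pathwise. Once $\overline X$ exceeds $\log(K)$ the stopping time is almost surely finite: either $\overline X$ reaches $\epsilon$, where the threshold $g_\epsilon$ degenerates to $0$ and one stops, or, failing that, $X$ drifts to $-\infty$ so that $\overline X_t-X_t\to\infty$ while $g_\epsilon(\overline X_t)$ stays bounded, forcing the excursion to cross the boundary. Hence $\P_{x,s}[\tau^*_\epsilon<\infty]$ equals the probability that the running maximum ever exceeds $\log(K)$, i.e.\ the classical first-passage probability $\P_x[\tau^+_{\log(K)}<\infty]$, which is $1$ exactly when $\psi^\prime(0+)\geq0$ and otherwise equals the exponential $e^{-\Phi(0)(\log(K)-x)}$ governed by the positive root of $\psi$ (the two cases in fact unify through $\Phi(0)=0\Leftrightarrow\psi^\prime(0+)\geq0$). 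The main obstacle throughout is the rigorous implementation of the change-of-variables step uniformly across the bounded- and unbounded-variation regimes, together with the integrability and limiting estimates needed to pass from the local (super)martingale property to optional stopping over all of $\mathcal M$, including the delicate limit defining $A_\epsilon$.
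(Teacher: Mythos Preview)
Your proposal is correct and follows essentially the same verification route as the paper: apply the It\^o--Meyer formula to $e^{-qt}V(X_t,\overline X_t)$ (the paper first expands $Z^{(q)}(Y_t)$ with $Y_t=X_t-\overline X_t+g_\epsilon(\overline X_t)$ and then uses integration by parts, but the content is identical), identify the vanishing diagonal term via the ODE~\eqref{aa} and the nonpositive drift via $(\Gamma-q)Z^{(q)}\leq 0$, deduce the supermartingale property, check that the process stopped at $\tau^*_\epsilon$ is a genuine martingale, and conclude by optional stopping together with Fatou and the domination $V\geq(e^{s\wedge\epsilon}-K)^+$. One small discrepancy worth flagging: for $\P_{x,s}[\tau^*_\epsilon<\infty]$ you arrive at $e^{-\Phi(0)(\log(K)-x)}$, which is the standard undiscounted first-passage probability and is what your pathwise argument actually yields, whereas the theorem as printed has $\Phi(q)$; the paper's proof does not address this clause explicitly.
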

\begin{rem}\label{exc_comp}
With the help of excursion theory, it is possible  to obtain an alternative representation for $V^*_\epsilon(s,s)$ for $\log(K)\leq s<\epsilon$. (See Appendix~\ref{exc_calc} for the relevant computations). Specifically, under the same assumptions as in Theorem~\ref{main_result}, we have
\begin{eqnarray}
V_\epsilon^*(s,s)&=&\int_{s}^{\epsilon\wedge \beta} (e^t-K)\hat f(g_\epsilon(t))\exp\bigg(-\int_{s}^t\frac{\dW{q}{g_\epsilon(u)}}{\W{q}{g_\epsilon(u)}}\,du\bigg)\,dt\label{exc_int}\\
&&+(e^{\epsilon\wedge\beta}-K)\exp\bigg(-\int_{s}^{\epsilon\wedge \beta}\frac{\dW{q}{g_\epsilon(u)}}{\W{q}{g_\epsilon(u)}}\,du\bigg)\notag\label{nosecondterm}
\end{eqnarray}
where $\hat f(u)=\Y{q}{u}{u}{u}{u}$ and we understand $\beta = \infty$ unless $q>0$ and $W^{(q)}(0+)\geq q^{-1}$, in which case we take $\beta = \log\big(K(1-\mathtt{d}/q)^{-1}\big)$ as before. In particular, we can identify the value $A_\epsilon$ as the above expression, setting  $s=\log(K)$.
\end{rem}
\indent Let us now discuss some consequences of Theorem~\ref{main_result}. Firstly, it shows that if $\psi^\prime(0+)\geq 0$ the stopping problem has an optimal solution in the smaller class of $[0,\infty)$-valued $\mathbb{F}$-stopping times. On the other hand, if there is a possibility that the process $X$ drifts to $-\infty$ before reaching $\log(K)$, which occurs exactly when $\psi^\prime(0+)<0$, then the probability that $\tau_\epsilon^*$ is infinite is strictly positive and $\tau_\epsilon^*$ is only optimal in the class of $[0,\infty]$-valued $\mathbb{F}$-stopping times.\\
\indent Secondly, when $\W{q}{0+}\geq q^{-1}$ or, equivalently, $X$ is of bounded variation with $q\geq\mathtt{d}$, the result shows that  $
g_\epsilon(s)$ hits the origin at $\epsilon\wedge\beta$, where $\beta=\log\big(K(1-\mathtt{d}/q)^{-1}\big)$ (see Fig.~\ref{boundary1}). Intuitively speaking, if $\beta<\epsilon$, the discounting is so strong that it is best to stop even before reaching the level $\epsilon$. On the other hand, if $\beta\geq\epsilon$, it would be better to wait longer, but as there is a cap we are forced to stop as soon as we have reached it.\\
\indent As already observed in~\cite{maximum_process}, it is also the case in our setting that, if $\W{q}{0+}<q^{-1}$, the slope of $g_\epsilon$ at $\epsilon\wedge\beta$ (and hence the shape of the optimal boundary $s\mapsto s-g_\epsilon(s)$) changes according to the path variation of $X$. Specifically, it holds that
\begin{equation*}
\lim_{s\uparrow\epsilon\wedge\beta}g_\epsilon^\prime(s)=\begin{cases}
-\infty,&\text{if $X$ is of unbounded variation.}\\
1-\frac{e^{\epsilon\wedge\beta}\mathtt{d}}{(e^{\epsilon\wedge\beta}-K)q},&\text{if $X$ is of bounded variation.}
\end{cases}
\end{equation*}
\indent Next, introduce the sets
\begin{eqnarray}
&&C^*_I=C^*_{I,\epsilon}:=\{(x,s)\in E\,:\,x>\log(K),x>s-g_\epsilon(s)\},\label{sets}\\
&&D^*=D^*_\epsilon:=\{(x,s)\in E\,:\,s>\log(K), x\leq s-g_\epsilon(s)\}\notag.
\end{eqnarray}
\noindent Two examples of $g_\epsilon$ and the corresponding continuation region $C^*_I\cup C^*_{II}$ and stopping region $D^*$ are pictorially displayed in Fig.~\ref{boundary1}. 
\begin{figure}[h]
\includegraphics[scale=0.5]{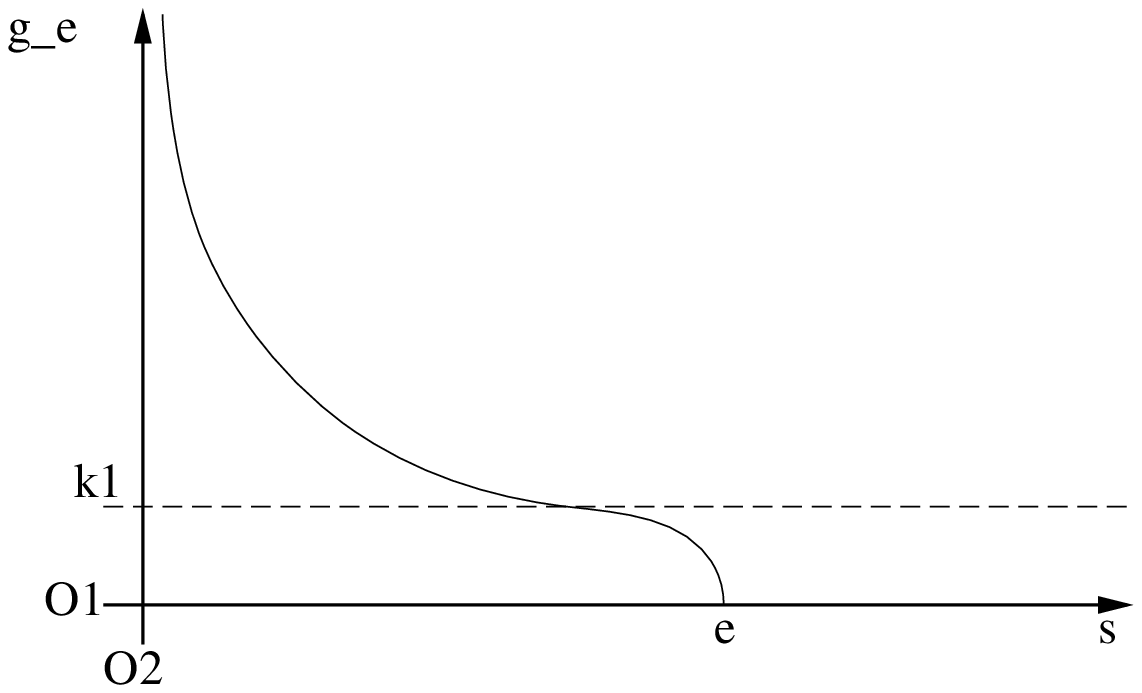}\quad\quad
\includegraphics[scale=0.5]{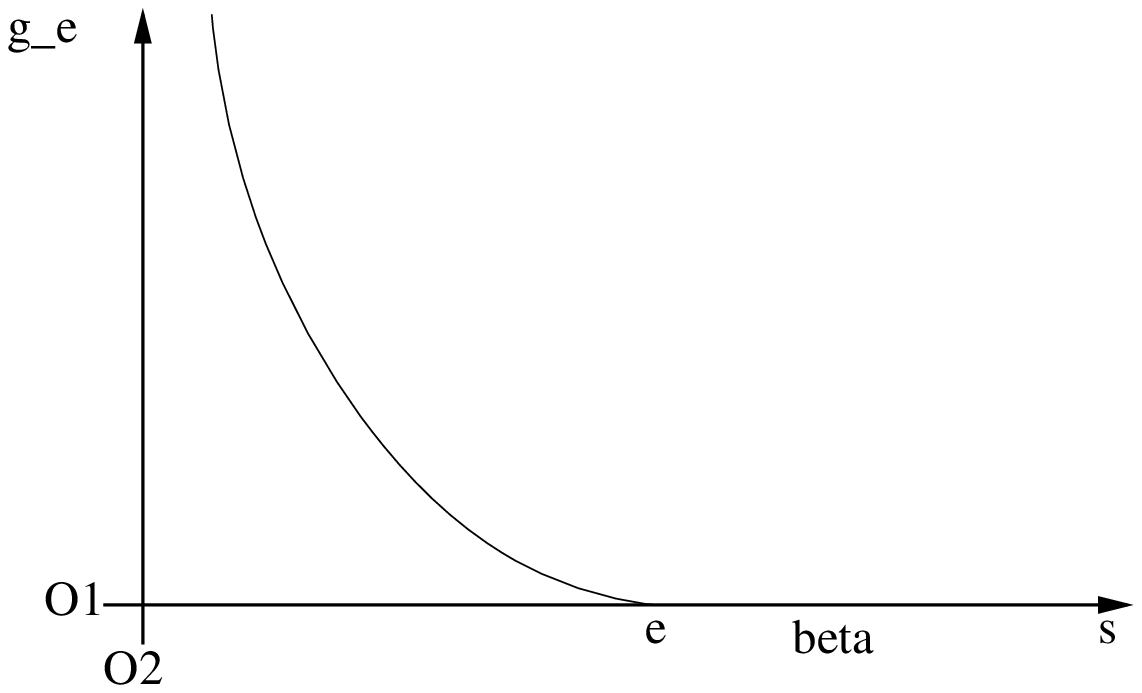}
\includegraphics[scale=0.45]{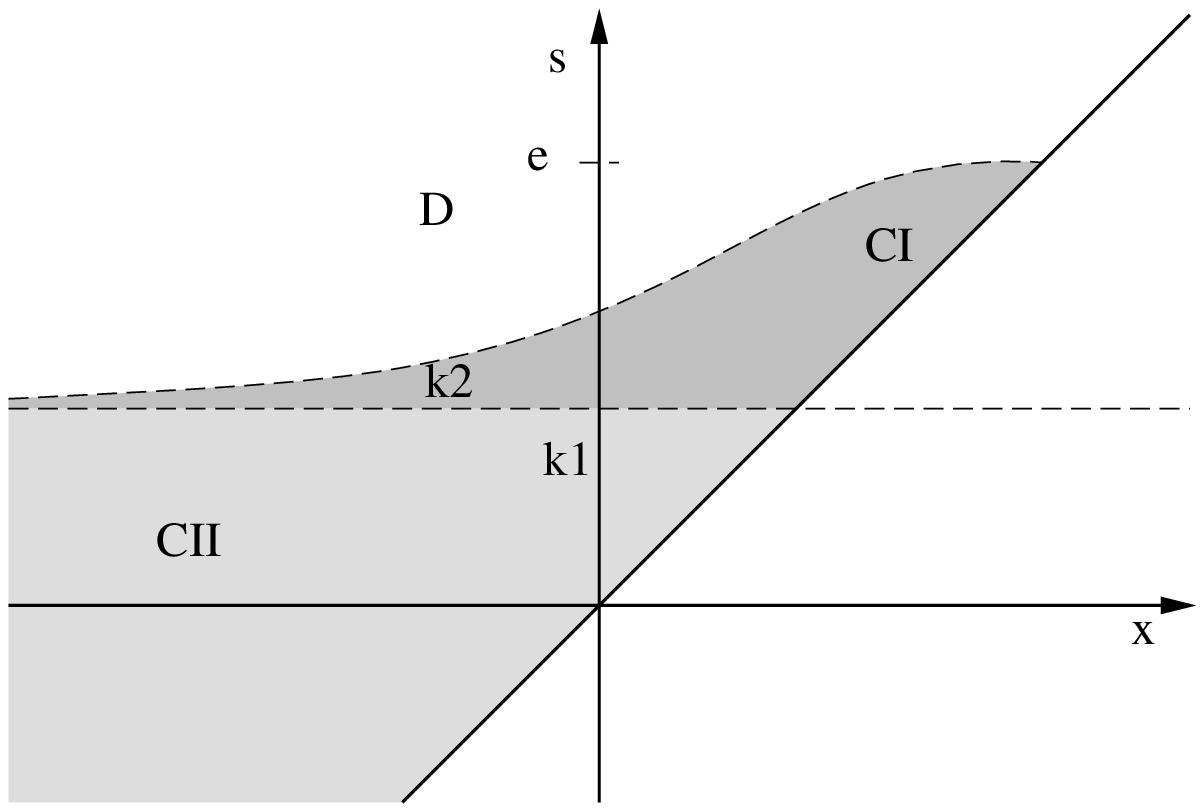}\quad\quad
\includegraphics[scale=0.45]{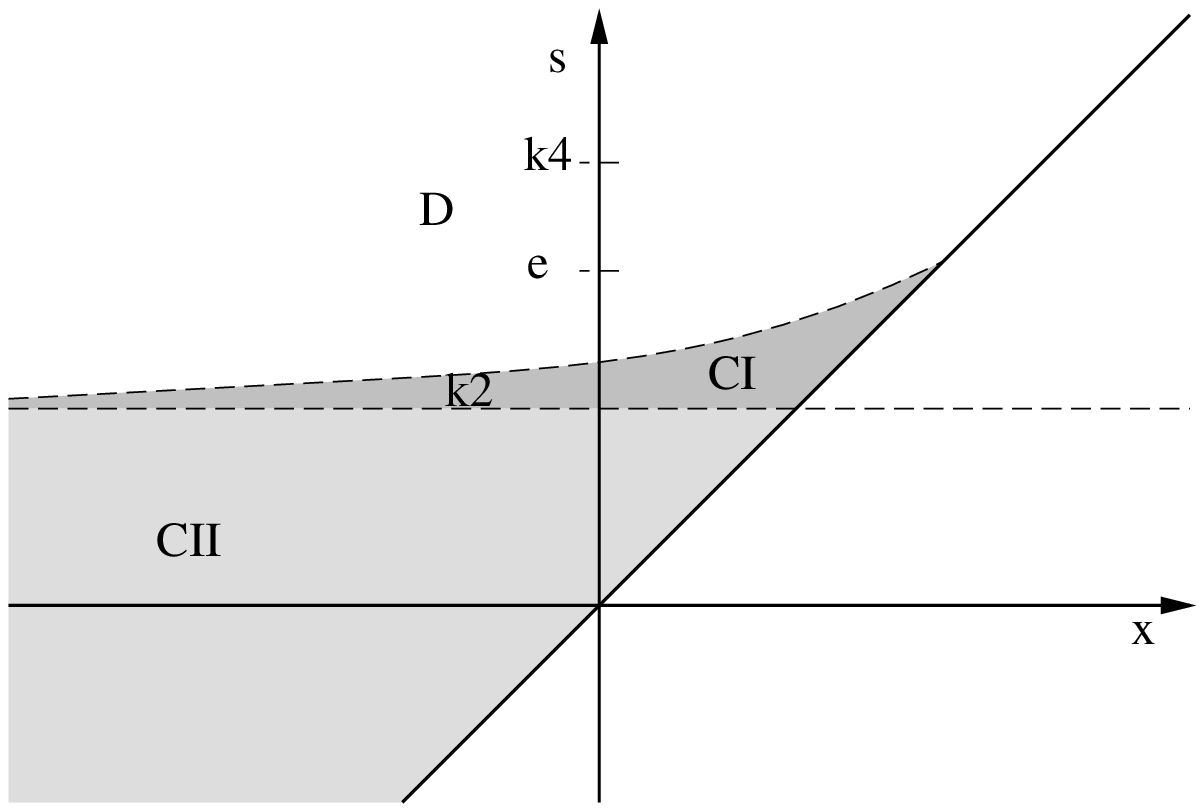}
\caption{For the two pictures on the left it is assumed that $q>0$ and $\W{q}{0+}=0$, whereas on the right it is assumed that $q>0$, $\W{q}{0+}\geq q^{-1}$ and $\epsilon<\beta$.}\label{boundary1}
\end{figure}

\subsection{Verification of the case $q>0\vee\psi(1)$ and $\epsilon=\infty$}\label{one}
The analogous result to Theorem~\ref{main_result} reads as follows.
\begin{thm}\label{main_result_1}
Suppose that $q>0\vee\psi(1)$ and $\epsilon=\infty$. Then the solution to~\eqref{problem1} is given by
\begin{equation}
V_\infty^*(x,s)=\begin{cases}(e^s-K)\Z{q}{x-s+g_\infty(s)},&(x,s)\in E_1,\\
e^{-\Phi(q)(\log(K)-x)}A_\infty,&(x,s)\in C^*_{II},\end{cases}\label{value_function_1}
\end{equation}
with value $A_\infty\in(0,\infty)$ given by
\begin{equation*}
A_\infty:=\E_{\log(K),\log(K)}\big[e^{-q\tau^*_\infty}(e^{\overline X_{\tau^*_\infty}}-K)\big]=\lim_{s\downarrow\log(K)}(e^s-K)\Z{q}{g_\infty(s)},
\end{equation*}
and optimal stopping time
\begin{equation}
\tau_\infty^*=\inf\{t\geq 0\,:\,\overline X_t-X_t\geq g_\infty(\overline X_t)\text{ and }\overline X_t>\log(K)\},\label{opt_st_time_1}
\end{equation}
where $g_\infty$ is given in Lemma~\ref{ode1}. Moreover,
\begin{equation*}
\P_{x,s}[\tau_\infty^*<\infty]=\begin{cases}1,&\text{if }\psi^\prime(0+)\geq 0,\\
e^{-\Phi(q)(\log(K)-x)},&\text{if }\psi^\prime(0+)<0.
\end{cases}
\end{equation*}
\end{thm}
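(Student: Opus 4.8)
The plan is to deduce Theorem~\ref{main_result_1} from Theorem~\ref{main_result} by sending the cap $\epsilon\uparrow\infty$, the standing hypothesis $q>0\vee\psi(1)$ being exactly what keeps every limit finite. First I would settle the value function. Fix $(x,s)$; for every $\tau\in\mathcal{M}$ the integrand $e^{-q\tau}(e^{\overline X_\tau\wedge\epsilon}-K)^+$ is nondecreasing in $\epsilon$ and increases pointwise to $e^{-q\tau}(e^{\overline X_\tau}-K)^+$. Monotone convergence inside the expectation, followed by the supremum over $\tau$, shows that $\epsilon\mapsto V_\epsilon^*(x,s)$ is nondecreasing and that $\lim_{\epsilon\uparrow\infty}V_\epsilon^*(x,s)$ equals the true value of~\eqref{problem1} at $\epsilon=\infty$; it then remains to identify this limit with the right-hand side of~\eqref{value_function_1}. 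For that I would invoke the closed forms of Theorem~\ref{main_result} together with the convergence $g_\epsilon\uparrow g_\infty$ on $(\log(K),\infty)$. The latter should follow from the structure in Lemma~\ref{ode1}: two solutions of the first-order equation~\eqref{aa} cannot cross (its right-hand side is locally Lipschitz in $H$ for $H>0$), and a larger $\epsilon$ pushes the zero of the solution further to the right, so the family $\{g_\epsilon\}$ is increasing and its pointwise supremum is the distinguished solution with $\lim_{s\uparrow\infty}g_\infty(s)=k^*$. Continuity of \Z{q}{\cdot} then yields, for $(x,s)\in E_1$ and $\epsilon>s$, $V_\epsilon^*(x,s)=(e^s-K)\Z{q}{x-s+g_\epsilon(s)}\to(e^s-K)\Z{q}{x-s+g_\infty(s)}$, and, letting $s\downarrow\log(K)$ in the boundary expression, $A_\epsilon\to A_\infty$; on $C^*_{II}$ the factor $e^{-\Phi(q)(\log(K)-x)}$ survives the limit. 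The finiteness $A_\infty<\infty$, hence $V_\infty^*<\infty$, is where $q>\psi(1)$, equivalently $\Phi(q)>1$, enters, controlling the growth of $e^{\overline X_\tau}$ against the discounting.

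To upgrade convergence of values to attainment by $\tau_\infty^*$, note that $g_\epsilon\uparrow g_\infty$ enlarges the continuation regions, so $\tau_\epsilon^*\uparrow\tau_\infty^*$. On $\{\tau_\infty^*<\infty\}$ right-continuity of the paths gives $e^{-q\tau_\epsilon^*}(e^{\overline X_{\tau_\epsilon^*}\wedge\epsilon}-K)^+\to e^{-q\tau_\infty^*}(e^{\overline X_{\tau_\infty^*}}-K)^+$, while on $\{\tau_\infty^*=\infty\}$ the discounted payoff vanishes by the limsup convention and $q>0$. Passing to the limit in $V_\epsilon^*(x,s)=\E_{x,s}[e^{-q\tau_\epsilon^*}(e^{\overline X_{\tau_\epsilon^*}\wedge\epsilon}-K)^+]$, justified by a $q>\psi(1)$ domination uniform in $\epsilon$, then gives $V_\infty^*(x,s)=\E_{x,s}[e^{-q\tau_\infty^*}(e^{\overline X_{\tau_\infty^*}}-K)^+]$, i.e.\ optimality of $\tau_\infty^*$. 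As a limit-free alternative I would verify the candidate $(V_\infty^*,\tau_\infty^*)$ directly: $V_\infty^*$ dominates the gain, $\{e^{-qt}V_\infty^*(X_t,\overline X_t)\}$ is a supermartingale (from~\eqref{aa} on the continuation set, the $C^1$-fit of $V_\infty^*$ across the boundary, and the normal reflection of $\overline X$ along the diagonal) and a martingale up to $\tau_\infty^*$, whence optional stopping delivers both inequalities.

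The finiteness probability I would read off pathwise. Because $(X,\overline X)$ leaves $C^*_{II}$ only through $(\log(K),\log(K))$ and, by spectral negativity, $X$ creeps over $\log(K)$ without overshoot, the event $\{\tau_\infty^*<\infty\}$ reduces, modulo a null set, to first reaching $\log(K)$ and then stopping. If $\psi^\prime(0+)\geq0$ the maximum grows without bound, $\log(K)$ is reached a.s., and since $g_\infty(\overline X_t)\to k^*$ stays bounded as $\overline X_t\uparrow\infty$, some excursion below the running maximum a.s.\ exceeds the boundary, so the probability is $1$. If $\psi^\prime(0+)<0$ the process drifts to $-\infty$: whenever the overall maximum exceeds $\log(K)$ the terminal excursion forces $\overline X_t-X_t\to\infty>g_\infty(\overline X_t)$ and stopping is a.s., while otherwise $\tau_\infty^*=\infty$, so the probability collapses to the first-passage probability of $\log(K)$, which is the stated expression.

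The main obstacle I anticipate is quantitative rather than qualitative. Legitimising the interchange of limits requires an integrable envelope for $e^{-q\tau_\epsilon^*}(e^{\overline X_{\tau_\epsilon^*}\wedge\epsilon}-K)^+$ uniform in $\epsilon$, together with a careful accounting of the mass on $\{\tau_\epsilon^*=\infty\}$ and the limsup convention there. This is precisely the role of $q>\psi(1)$, and I expect the delicate part of the write-up to be the construction of this envelope and the proof that the supremum in~\eqref{problem1} is genuinely attained, rather than the routine passage of $g_\epsilon\to g_\infty$ through the scale-function formulas.
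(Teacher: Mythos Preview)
Your proposal is correct and follows essentially the same route as the paper: pass to the limit $\epsilon\uparrow\infty$ in Theorem~\ref{main_result}, using $g_\epsilon\uparrow g_\infty$, $\tau_\epsilon^*\uparrow\tau_\infty^*$, and dominated convergence under the envelope $\sup_{t\geq0}e^{-qt+\overline X_t}$, whose integrability when $q>\psi(1)$ is exactly the content of the paper's Lemma~\ref{integrability}. Two small remarks. First, your identification of $\lim_{\epsilon\uparrow\infty}V_\epsilon^*$ with $V_\infty^*$ via monotonicity of $\epsilon\mapsto(e^{\overline X_\tau\wedge\epsilon}-K)^+$ and an interchange of sup and limit is a clean shortcut; the paper instead defines the candidate $V_\infty:=\lim_\epsilon V_\epsilon^*$ and re-runs the verification (majorant, supermartingale via Fatou, attainment via dominated convergence) before concluding $V_\infty=V_\infty^*$. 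Both arguments are equivalent in substance. Second, your description of $g_\epsilon\uparrow g_\infty$ with $\lim_{s\uparrow\infty}g_\infty(s)=k^*$ tacitly assumes $\W{q}{0+}<q^{-1}$ (Lemma~\ref{ode1}\eqref{beh_4}); in the regime $\W{q}{0+}\geq q^{-1}$ (Lemma~\ref{ode1}\eqref{beh_5}) the limiting boundary hits zero at $\beta<\infty$ and in fact $g_\epsilon\equiv g_\infty$ for all $\epsilon\geq\beta$, so the convergence is trivial there---you should just note this case separately.
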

\begin{rem}
As in Remark~\ref{exc_comp}, $V_\infty^*(s,s)$ can be identified as the integral in~\eqref{exc_int} with $\epsilon=\infty$ for $\log(K)\leq s<\epsilon$ in the case $q>0$ and \mbox{$W^{(q)}(0+)\geq q^{-1}$}. Otherwise it is identified as 
\[
V_\infty^*(s,s)=\int_{s}^{\infty} (e^t-K)\hat f(g_\infty(t))\exp\bigg(-\int_{s}^t\frac{\dW{q}{g_\infty(u)}}{\W{q}{g_\infty(u)}}\,du\bigg)\,dt,
\] 
where $\hat f(u)=\Y{q}{u}{u}{u}{u}$ as before. (See again the computations in Appendix B).
In particular, one obtains an alternative expression for $A_\infty$.
\end{rem}
\indent Similarly to Theorem~\ref{main_result} one sees again that if $\psi^\prime(0+)\geq 0$ there is an optimal stopping time in the class of all $[0,\infty)$-valued $\mathbb{F}$-stopping times. Furthermore, let $C^*_I=C^*_{I,\infty}$ and $D^*=D^*_\infty$ denote the same sets as in~\eqref{sets}, but with $g_\infty$ instead of $g_\epsilon$. The (qualitative) behaviour of $g_\infty$ and the resulting shape of the continuation region $C^*_I\cup C^*_{II} $ and stopping region $D^*$ are illustrated in Fig.~\ref{boundary}.

\begin{figure}[h]
\includegraphics[scale=0.5]{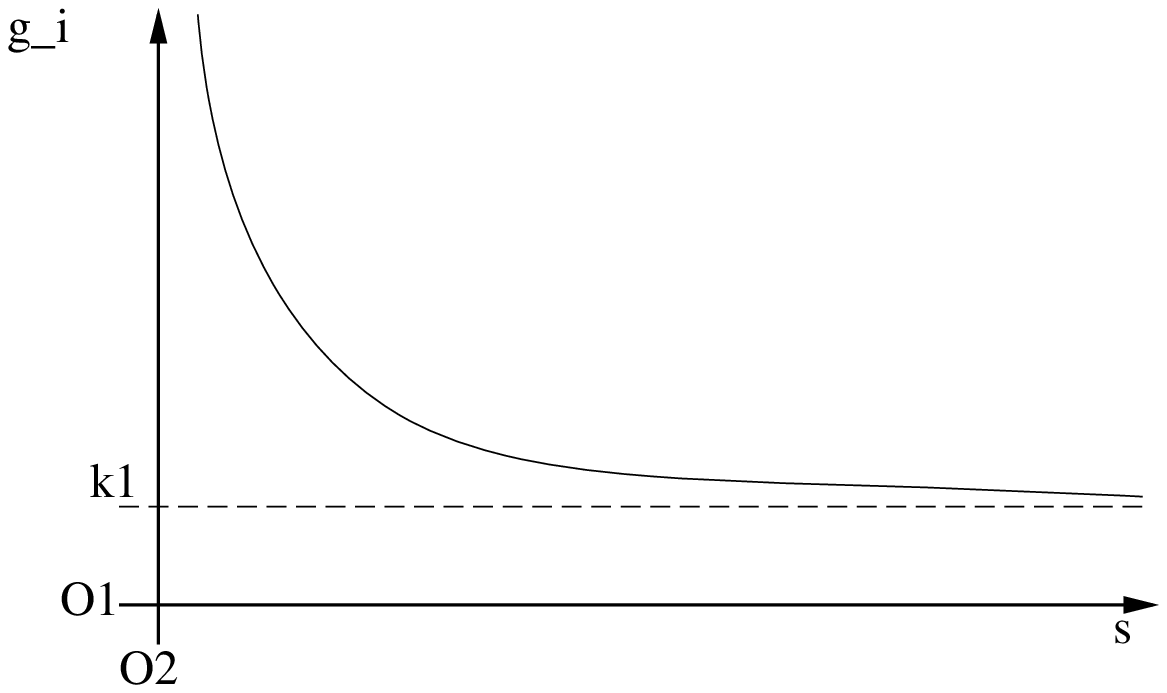}\quad
\includegraphics[scale=0.5]{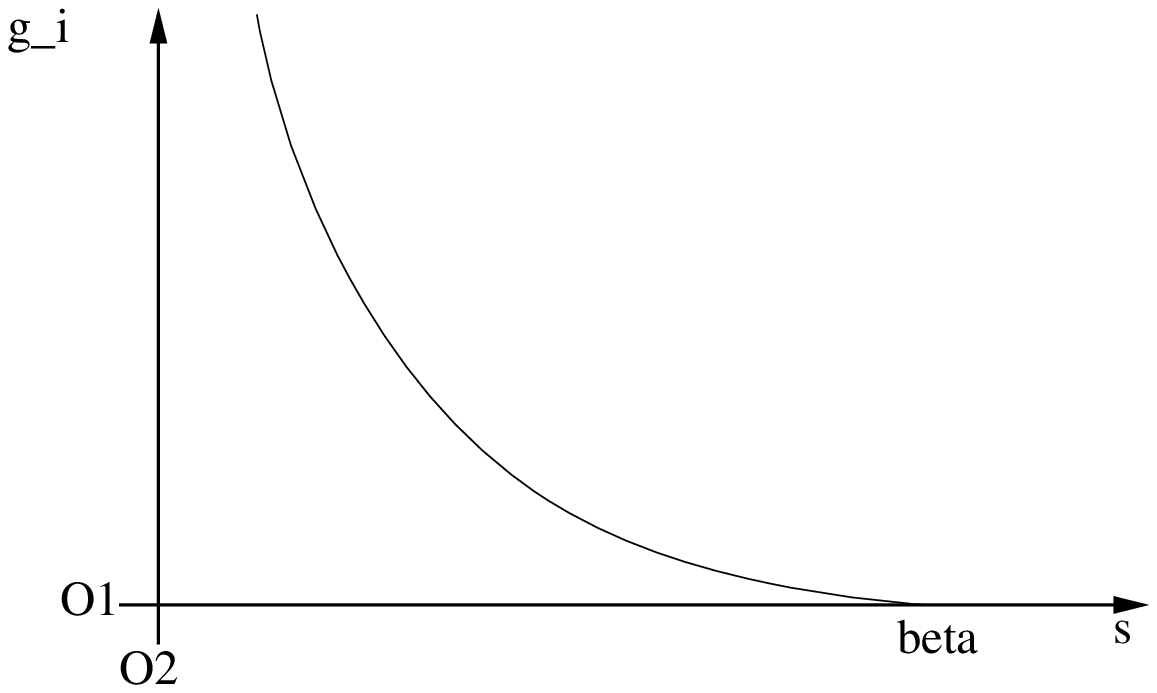}
\includegraphics[scale=0.45]{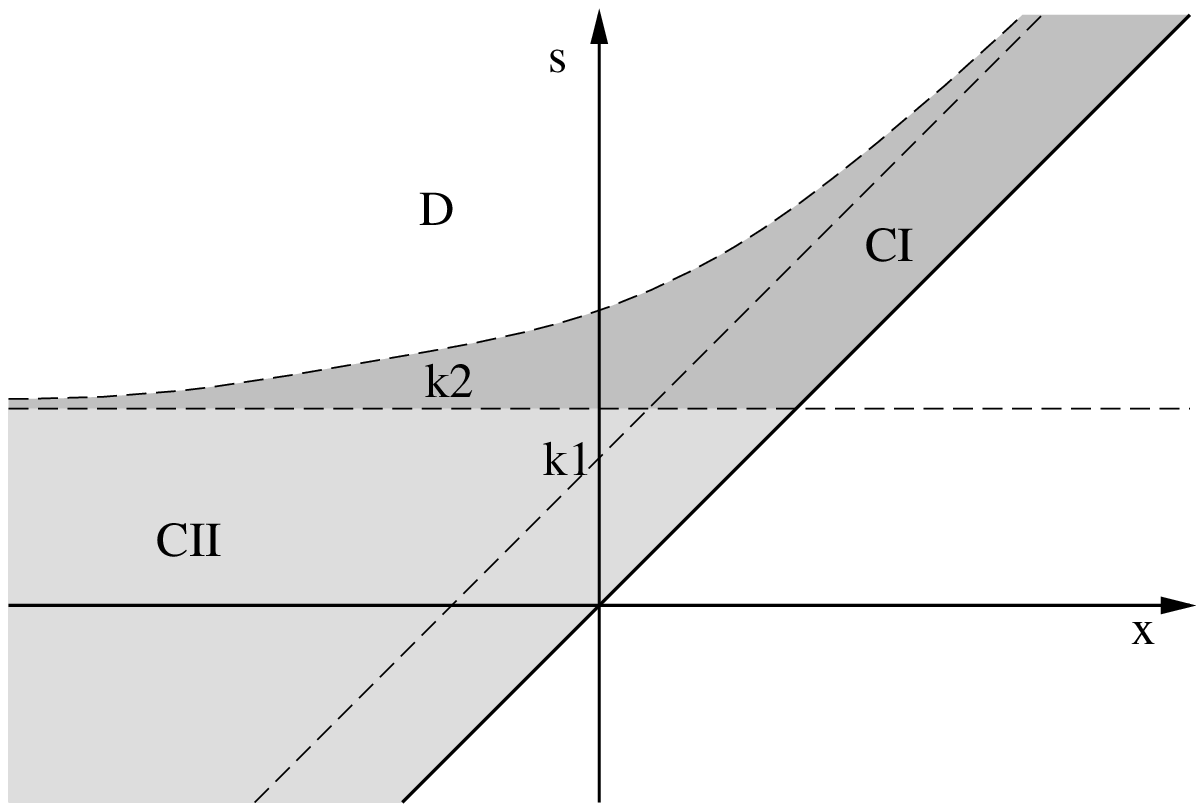}\quad\quad
\includegraphics[scale=0.45]{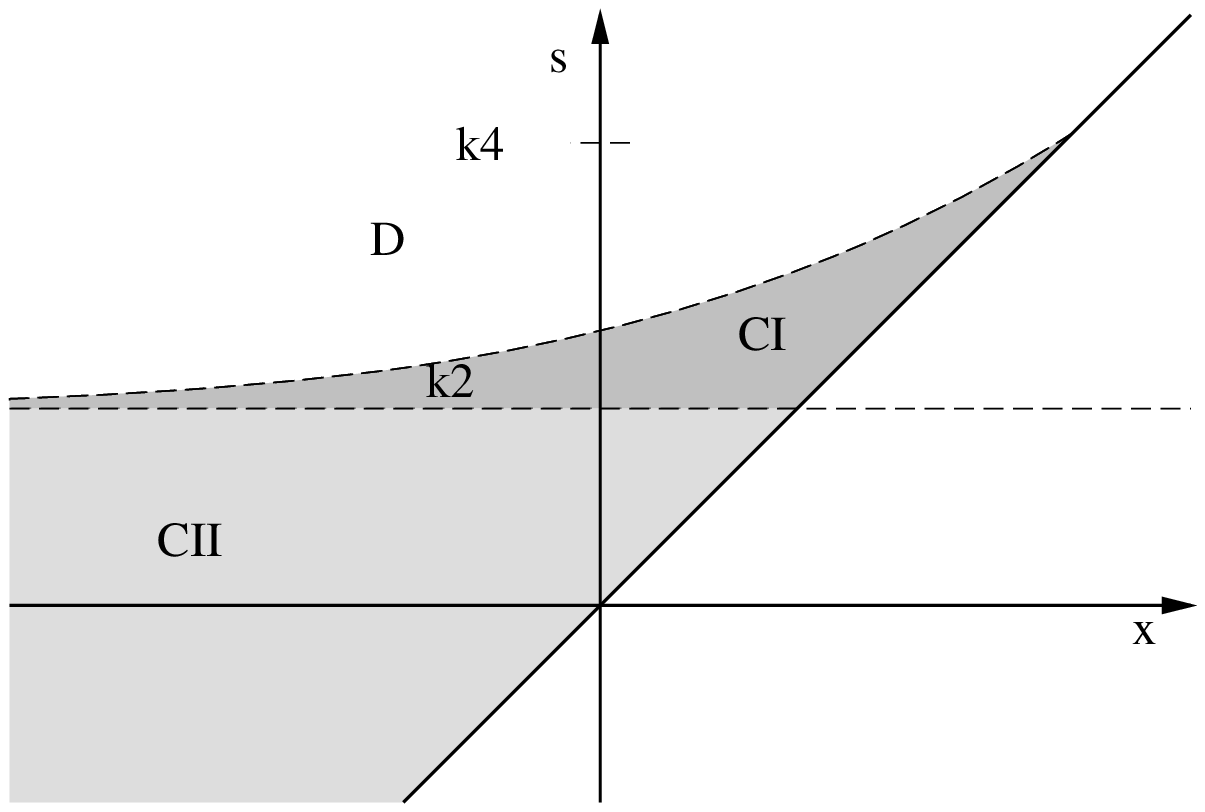}
\caption{For the two pictures on the left it is assumed that $q>0\vee\psi(1)$ and $\W{q}{0+}<q^{-1}$, whereas on the right it is assumed that  $q>0\vee\psi(1)$ and $\W{q}{0+}\geq q^{-1}$.}\label{boundary}
\end{figure}

\subsection{The special cases}
In this subsection we deal with the cases that have not been considered yet, i.e., the special cases (see Section~\ref{regimes}).
\begin{lem}\label{special_cases_1}
Suppose that $q=0$ and $\epsilon\in(\log(K),\infty)$.
{\renewcommand{\theenumi}{\alph{enumi}}
\renewcommand{\labelenumi}{(\theenumi)}
\begin{enumerate}
\item\label{a} When $\psi^\prime(0+)<0$ and $\Phi(0)\neq 1$, then the solution to~\eqref{problem1} is given by
\begin{equation*}
V_\epsilon^*(x,s)=\begin{cases}e^\epsilon-K,&s\geq\epsilon,\\e^s-K+\frac{e^{x\Phi(0)}}{\Phi(0)-1}\big(e^{s(1-\Phi(0))}-e^{\epsilon(1-\Phi(0))}\big)
,&\log(K)\leq s<\epsilon,\\
e^{-\Phi(0)(\log(K)-x)}A_\epsilon,&s<\log(K),\end{cases}
\end{equation*}
where $A_\epsilon:=\frac{K^{\Phi(0)}(K^{1-\Phi(0)}-e^{\epsilon(1-\Phi(0))})}{\Phi(0)-1}$, and $\tau_\epsilon^*=\tau_\epsilon^+$. If $\Phi(0)=1$, then the middle term on the right-hand side in the expression for $V_\epsilon^*(x,s)$ has to be replaced by $e^s-K+e^x(\epsilon-s)$ and $A_\epsilon$ by $K(\epsilon-\log(K))$.\\
\item\label{b} When $\psi^\prime(0+)\geq 0$, then solution to~\eqref{problem1} is given by $V_\epsilon^*\equiv e^\epsilon-K$ and $\tau_\epsilon^*=\tau^+_\epsilon$. 
\end{enumerate}}
\end{lem}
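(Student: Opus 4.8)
The plan rests on a single structural observation: when $q=0$ the reward process $R_t:=(e^{\overline X_t\wedge\epsilon}-K)^+$ is nondecreasing in $t$ (because $t\mapsto\overline X_t$ is nondecreasing and $u\mapsto(e^{u\wedge\epsilon}-K)^+$ is nondecreasing) and is bounded above by $e^\epsilon-K$. Consequently $R_t$ increases to the finite limit $R_\infty=(e^{\overline X_\infty\wedge\epsilon}-K)^+$, where $\overline X_\infty:=s\vee\sup_{u\geq0}X_u$, and the $\limsup$-convention in~\eqref{problem1} forces $R_\tau=R_\infty$ on $\{\tau=\infty\}$. Hence $R_\tau\leq R_\infty$ pointwise for every $\tau\in\mathcal M$, so that $V_\epsilon^*(x,s)\leq\E_{x,s}[R_\infty]$, with the choice $\tau=\infty$ attaining the bound. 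First I would record this to reduce the problem to evaluating $\E_{x,s}[R_\infty]$. To see that the concrete time $\tau_\epsilon^+$ is optimal, I would note that, since $X$ has no positive jumps and thus passes continuously over levels, on $\{\tau_\epsilon^+<\infty\}$ we have $\overline X_{\tau_\epsilon^+}=\epsilon$ and $R_{\tau_\epsilon^+}=e^\epsilon-K=R_\infty$, while on $\{\tau_\epsilon^+=\infty\}$ the convention again gives $R_{\tau_\epsilon^+}=R_\infty$; thus $\E_{x,s}[R_{\tau_\epsilon^+}]=\E_{x,s}[R_\infty]$ and $\tau_\epsilon^*=\tau_\epsilon^+$ is optimal.

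Next I would compute $\E_{x,s}[R_\infty]$ explicitly. Since $\psi^\prime(0+)<0$ we have $\Phi(0)>0$ and $\sup_{u\geq0}X_u-x$ is exponentially distributed with parameter $\Phi(0)$ (equivalently $\P_x[\tau_b^+<\infty]=e^{-\Phi(0)(b-x)}$ for $b\geq x$, cf.~\cite{kyprianou}). Writing $M:=\sup_{u\geq0}X_u$ so that $\overline X_\infty=s\vee M$, for $\log(K)\leq s<\epsilon$ the payoff is nonnegative and I would split
\[
\E_{x,s}[R_\infty]=e^s\,\P_x[M\leq s]+\E_x\big[e^M\mathbf 1_{\{s<M\leq\epsilon\}}\big]+e^\epsilon\,\P_x[M>\epsilon]-K,
\]
and integrate each term against the exponential density $\Phi(0)e^{-\Phi(0)(m-x)}$. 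The middle term equals $\Phi(0)e^{\Phi(0)x}\int_s^\epsilon e^{m(1-\Phi(0))}\,dm$, and for $\Phi(0)\neq1$ a routine algebraic simplification collapses the three contributions into the stated expression $e^s-K+\frac{e^{x\Phi(0)}}{\Phi(0)-1}\big(e^{s(1-\Phi(0))}-e^{\epsilon(1-\Phi(0))}\big)$. The case $s\geq\epsilon$ gives $\overline X_\infty\wedge\epsilon=\epsilon$ and hence $e^\epsilon-K$ immediately.

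For $s<\log(K)$ I would invoke the strong Markov property at $\tau_{\log(K)}^+$: on $\{\tau_{\log(K)}^+=\infty\}$ one has $\overline X_\infty\leq\log(K)$ and $R_\infty=0$, while on $\{\tau_{\log(K)}^+<\infty\}$ the process restarts at $(\log(K),\log(K))$. This yields $V_\epsilon^*(x,s)=e^{-\Phi(0)(\log(K)-x)}V_\epsilon^*(\log(K),\log(K))$, and setting $s=x=\log(K)$ in the middle formula identifies $A_\epsilon=V_\epsilon^*(\log(K),\log(K))$ as claimed. The degenerate case $\Phi(0)=1$ is handled by replacing $\int_s^\epsilon e^{m(1-\Phi(0))}\,dm$ with $\int_s^\epsilon dm=\epsilon-s$, which produces the substitute $e^s-K+e^x(\epsilon-s)$ and $A_\epsilon=K(\epsilon-\log(K))$. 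Finally, for part~\eqref{b}, when $\psi^\prime(0+)\geq0$ we have $\Phi(0)=0$ and $\sup_{u\geq0}X_u=+\infty$ almost surely, so $\overline X_\infty\wedge\epsilon=\epsilon$, $R_\infty\equiv e^\epsilon-K$, and $\tau_\epsilon^+<\infty$ almost surely attains it; hence $V_\epsilon^*\equiv e^\epsilon-K$.

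The genuinely delicate points, as opposed to the bookkeeping, are two. First, the reduction $V_\epsilon^*(x,s)=\E_{x,s}[R_\infty]$ must use the prescribed $\limsup$-convention carefully, so that the inequality $R_\tau\leq R_\infty$ holds for \emph{all} (possibly infinite) stopping times; the monotonicity of $R_t$ is precisely what makes this work, and it is exactly the feature that breaks down once $q>0$. Second, one must invoke the exponential law of $M-x$ and the creeping property only in the regime where they are valid, i.e.\ verify that $\psi^\prime(0+)<0$ indeed forces $\Phi(0)>0$ and $M<\infty$ almost surely. Everything else is an elementary integration together with the case split $s\geq\epsilon$, $\log(K)\leq s<\epsilon$, $s<\log(K)$, so I expect the only real obstacle to be presenting the $\limsup$ reduction cleanly and rigorously.
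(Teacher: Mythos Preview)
Your proposal is correct and follows essentially the same approach as the paper: both exploit the monotonicity and boundedness of the reward when $q=0$ to reduce the problem to computing $\E_{x,s}[(e^{\overline X_\infty\wedge\epsilon}-K)^+]$, then evaluate this via the exponential law of the overall supremum when $\psi'(0+)<0$ and handle $s<\log(K)$ by the strong Markov property at $\tau_{\log(K)}^+$. Your write-up is in fact a bit more careful than the paper's in spelling out the $\limsup$-convention and the pointwise inequality $R_\tau\leq R_\infty$, but the substance is identical.
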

Note that although the optimal stopping time is the same in both parts of Lemma~\ref{special_cases_1}, in~\eqref{a} it attains the value infinity with positive probability, whereas in~\eqref{b} this happens with probability zero. Hence, in~\eqref{b} there is actually an optimal stopping time in the class of finite $\mathbb{F}$-stopping times.
\begin{lem}\label{special_cases_2}
Suppose that $\epsilon=\infty$. 
{\renewcommand{\theenumi}{\alph{enumi}}
\renewcommand{\labelenumi}{(\theenumi)}
\begin{enumerate}
\item\label{first_part} Assume that $q=0$. If $\psi^\prime(0+)<0$ and $\Phi(0)>1$, we have
\begin{equation}
V_\infty^*(x,s)=\begin{cases}e^s-K+\frac{e^{x\Phi(0)+s(1-\Phi(0))}}{\Phi(0)-1},&s\geq\log(K),\\
e^{-\Phi(0)(\log(K)-x)}\frac{K}{\Phi(0)-1},&s<\log(K),\end{cases}
\end{equation}
and the optimal stopping time is given by $\tau_\infty^*=\infty$. On the other hand, if either $\psi^\prime(0+)<0$ and $\Phi(0)\leq 1$ or $\psi^\prime(0+)\geq 0$, then $V_\infty^*(x,s)\equiv\infty$ and $\tau_\infty^*=\infty$. 
\item\label{second_part} When $0<q\leq\psi(1)$, we have $V_\infty^*(x,s)\equiv\infty$.
\end{enumerate}}
\end{lem}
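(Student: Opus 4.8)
The plan is to treat the two parts separately: part~(a) ($q=0$) is an honest computation of a finite value in the convergent regime together with an elementary divergence argument otherwise, whereas part~(b) ($0<q\le\psi(1)$) is purely a ``the value is infinite'' statement obtained by comparison with the Shepp--Shiryaev problem.

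For part~(a) the decisive observation is that with $q=0$ there is no discounting and, since $t\mapsto\overline X_t$ is nondecreasing, so is the reward $(e^{\overline X_t}-K)^+$. Hence for every $\tau\in\mathcal M$ monotone convergence gives $\E_{x,s}[(e^{\overline X_\tau}-K)^+]\le\E_{x,s}[(e^{\overline X_\infty}-K)^+]$, and $\tau=\infty$ attains this bound because the payoff on $\{\tau=\infty\}$ is defined as $\limsup_{t\to\infty}(e^{\overline X_t}-K)^+=(e^{\overline X_\infty}-K)^+$. Thus $\tau_\infty^*=\infty$ and the problem reduces to evaluating $\E_{x,s}[(e^{\overline X_\infty}-K)^+]$. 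If $\psi^\prime(0+)\ge0$ the process does not drift to $-\infty$, so $\overline X_\infty=+\infty$ a.s.\ (equivalently $\tau^+_a<\infty$ a.s.\ for every $a$), and letting $a\uparrow\infty$ along $\tau=\tau^+_a$ already forces $V_\infty^*\equiv\infty$. If $\psi^\prime(0+)<0$, then $\overline X_\infty=s\vee(x+\mathbf e)$ with $\mathbf e:=\sup_{u\ge0}(X_u-x)$ exponentially distributed of rate $\Phi(0)>0$ (the $q=0$, $a\to-\infty$ specialisation of~\eqref{scale1}). I would then compute $\E_{x,s}[(e^{s\vee(x+\mathbf e)}-K)^+]$ directly: for $s\ge\log(K)$, splitting the exponential integral at $\{\mathbf e=s-x\}$ and noting that the two terms carrying $Ke^{-\Phi(0)(s-x)}$ cancel yields exactly $e^s-K+e^{x\Phi(0)+s(1-\Phi(0))}/(\Phi(0)-1)$; for $s<\log(K)$ the strong Markov property at $\tau^+_{\log(K)}$ (using that $X$ creeps upward, so there is no overshoot) gives $V_\infty^*(x,s)=e^{-\Phi(0)(\log(K)-x)}V_\infty^*(\log(K),\log(K))=e^{-\Phi(0)(\log(K)-x)}K/(\Phi(0)-1)$. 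The same computation exposes the dichotomy: $\E[e^{\mathbf e}]=\Phi(0)/(\Phi(0)-1)$ is finite iff $\Phi(0)>1$, so $\Phi(0)\le1$ makes the integral diverge and $V_\infty^*\equiv\infty$, matching the claim.

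For part~(b) the guiding idea is a lower bound by the classical Shepp--Shiryaev value. Using $(e^{\overline X_\tau}-K)^+\ge e^{\overline X_\tau}-K$ and $\E_{x,s}[e^{-q\tau}]\le1$ for $q>0$,
\begin{equation*}
V_\infty^*(x,s)\ge\sup_{\tau}\E_{x,s}\big[e^{-q\tau}(e^{\overline X_\tau}-K)\big]\ge\sup_{\tau}\E_{x,s}\big[e^{-q\tau}e^{\overline X_\tau}\big]-K,
\end{equation*}
and the right-hand supremum is the Shepp--Shiryaev value, known to be $+\infty$ precisely when $0<q\le\psi(1)$ (cf.~\cite{exitproblems,maximum_process}). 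For $0<q<\psi(1)$ this is elementary even without citation: the deterministic times $\tau\equiv t$, together with $\overline X_t\ge X_t$ and Jensen's inequality applied to the convex map $y\mapsto(e^y-K)^+$, give $\E_{x,s}[e^{-qt}(e^{X_t}-K)^+]\ge e^{-qt}(e^{x+\psi(1)t}-K)^+\to\infty$.

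The step I expect to be the main obstacle is the boundary case $q=\psi(1)$. Here the elementary estimates fail: stopping at $\tau^+_a$ only produces the bounded quantity $(e^a-K)e^{-\Phi(q)(a-x)}\to e^x$, since $\Phi(\psi(1))=1$, and the deterministic-time bound no longer grows. To recover infiniteness one should Esscher-transform via~\eqref{changeofmeasure} with $c=1$: because $q=\psi(1)$,
\begin{equation*}
\E_{x,s}\big[e^{-q\tau}(e^{\overline X_\tau}-K)^+\big]=e^x\,\E^1_{x,s}\big[e^{\overline X_\tau-X_\tau}(1-Ke^{-\overline X_\tau})^+\big],
\end{equation*}
and under $\P^1$ the Laplace exponent $\psi_1(\theta)=\psi(\theta+1)-\psi(1)$ satisfies $\psi_1^\prime(0+)=\psi^\prime(1)>0$, so $X$ drifts to $+\infty$ while the reflected process $\overline X-X$ returns to $0$ infinitely often and its excursions have unbounded height (the excursion measure charges arbitrarily high excursions, as $X$ is not a subordinator). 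Consequently $\sup_{t\ge0}(\overline X_t-X_t)=\infty$ $\P^1$-a.s., and stopping when $\overline X-X$ first exceeds $M$ (after $\overline X$ has passed $\log(K)$, which is certain) makes the transformed expectation exceed a fixed positive multiple of $e^{M}$; letting $M\uparrow\infty$ gives $V_\infty^*\equiv\infty$. Establishing this unboundedness of the reflected process cleanly, rather than merely quoting the Shepp--Shiryaev result, is the crux of the argument.
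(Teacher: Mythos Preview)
Your proof is correct, but it proceeds by a genuinely different route from the paper's. For part~(a) the paper obtains $V_\infty^*$ by taking $\epsilon\uparrow\infty$ in Lemma~\ref{special_cases_1} via monotone convergence, rather than by the direct integration against the exponential law of $\sup_{u\ge0}X_u$ that you carry out; both yield the same formula, and your computation is slightly more self-contained. For part~(b) the paper's argument is structurally different: it uses Theorem~\ref{main_result} together with Lemma~\ref{ode1}\eqref{beh_3} to observe that for $0<q\le\psi(1)$ one has $g_\epsilon(s)\uparrow\infty$ as $\epsilon\uparrow\infty$, whence $V_\epsilon^*(x,s)=(e^{s\wedge\epsilon}-K)\Z{q}{x-s+g_\epsilon(s)}\to\infty$, and the boundary case $q=\psi(1)$ is absorbed for free by the ODE analysis. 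Your approach trades this reliance on the paper's earlier machinery for a direct probabilistic argument (comparison with Shepp--Shiryaev for $q<\psi(1)$, Esscher transform for $q=\psi(1)$), which is more elementary but forces you to handle the boundary case separately. The Esscher step is sound: your observation that $\psi'(1)>0$ follows from $\psi(1)=q>0$ and strict convexity, and the unboundedness of $\overline X-X$ under $\P^1$ holds because a spectrally negative L\'evy process remains non-monotone under exponential tilting.

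One small slip: in the $q<\psi(1)$ estimate you invoke Jensen for $y\mapsto(e^y-K)^+$ to conclude $\E_{x,s}[(e^{X_t}-K)^+]\ge(e^{x+\psi(1)t}-K)^+$, but Jensen in that form gives $(e^{\E[X_t]}-K)^+=(e^{x+t\psi'(0+)}-K)^+$, which is not what you want. The intended bound follows either from Jensen applied to the convex map $u\mapsto(u-K)^+$ with $u=e^{X_t}$, or more simply from $(u-K)^+\ge u-K$ together with $\E_{x,s}[e^{X_t}]=e^{x+\psi(1)t}$.
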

The second part in the Lemma~\ref{special_cases_2} is intuitively clear. If $0<q\leq\psi(1)$, then the average upwards motion of $X$ (and hence $\overline X$) is stronger than the discounting. On the other hand, $\psi^\prime(0+)<0$ means that $X$ will eventually drift to $-\infty$ and thus $X$ will eventually attain its maximum (in the pathwise sense). Of course, we do not know when this happens, but since there is no discounting we do not mind waiting forever. The other cases in Lemma~\ref{special_cases_2} have a similar interpretation.

\subsection{The maximality principle}
The maximality principle was understood as a powerful tool to solve a class of stopping problems for the maximum process associated with a one-dimensional time-homogeneous diffusion~\cite{maximality_principle}. Although we work with a different class of processes, our main results (Lemma~\ref{ode1}, Theorem~\ref{main_result}, Theorem~\ref{main_result_1} and Lemma~\ref{special_cases_2}\eqref{second_part}) can be reformulated through the maximality principle.
\begin{lem}\label{max_pr_1}
Suppose that $q>0$ and $\epsilon\in(\log(K),\infty)$. Define the set
\begin{equation*}
\mathcal{S}:=\big\{g\vert_{(\log(K),\epsilon)}\,\big\vert\, g\text{ is a solution to~\eqref{aa} defined at least on } (\log(K),\epsilon)\big\}.
\end{equation*}
Let $g_\epsilon^*$ be the minimal solution in $\mathcal{S}$. Then the solution to~\eqref{problem1} is given by~\eqref{value_function} and~\eqref{opt_st_time} with $g_\epsilon$ replaced by $g^*_\epsilon$.
\end{lem}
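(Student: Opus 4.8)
The plan is to piggyback on Theorem~\ref{main_result}, which has already established that the solution to~\eqref{problem1} has the form~\eqref{value_function} with optimal stopping time~\eqref{opt_st_time}, for the specific boundary function $g_\epsilon$ supplied by Lemma~\ref{ode1}. Since the present statement is word-for-word the same except that $g_\epsilon$ is replaced by the minimal element $g_\epsilon^*$ of $\mathcal{S}$, no new probabilistic work is required: the whole content collapses to the purely ODE-theoretic identity $g_\epsilon^*=g_\epsilon$ on $(\log(K),\epsilon)$. I would therefore isolate that identity, prove it, and then quote Theorem~\ref{main_result} verbatim.

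First I would check that $g_\epsilon\in\mathcal{S}$. By Lemma~\ref{ode1} the function $g_\epsilon$ is a solution to~\eqref{aa} whose graph lies in $U$; in cases \eqref{beh_1} and \eqref{beh_3} it is defined on all of $(\log(K),\epsilon)$, and in case \eqref{beh_2} on $(\log(K),\epsilon\wedge\beta)$, extended to $(\log(K),\epsilon)$ by the zero-convention adopted just after Lemma~\ref{ode1}. Thus $\mathcal{S}\neq\emptyset$ and $g_\epsilon$ is a candidate for the minimum. The core step is then to show that $g\geq g_\epsilon$ for every $g\in\mathcal{S}$. The right-hand side $F(s,H):=1-e^s\Z{q}{H}/\big((e^s-K)q\W{q}{H}\big)$ of~\eqref{aa} is continuous and continuously differentiable in $H$ on $U$, because there $e^s>K$, $W^{(q)}>0$ and $W^{(q)}\in C^1(0,\infty)$; hence the initial value problem for~\eqref{aa} is locally well-posed, and two distinct solutions can never meet inside $U$. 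Consequently, if some $g\in\mathcal{S}$ satisfied $g(s_1)<g_\epsilon(s_1)$ at a single point, then $g<g_\epsilon$ throughout their common domain. Letting $s\uparrow\epsilon\wedge\beta$, the squeeze $0<g(s)<g_\epsilon(s)$ together with $\lim_{s\uparrow\epsilon\wedge\beta}g_\epsilon(s)=0$ forces $g(s)\to 0$ with exactly the vanishing boundary behaviour characterising $g_\epsilon$; the uniqueness clause of Lemma~\ref{ode1} then gives $g=g_\epsilon$, contradicting $g<g_\epsilon$. Hence $g\geq g_\epsilon$ for all $g\in\mathcal{S}$, so $g_\epsilon$ is minimal, $g_\epsilon^*=g_\epsilon$, and Theorem~\ref{main_result} finishes the proof.

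The step I expect to be most delicate is the regime of Lemma~\ref{ode1}\eqref{beh_2} with $\beta<\epsilon$, where $g_\epsilon$ reaches the boundary $\{H=0\}$ of $U$ at the interior point $\beta$. There the squeeze must be run on $(\log(K),\beta)$ and combined with the observation that any genuine (strictly positive) solution surviving up to $\epsilon$ must dominate $g_\epsilon$ and cannot itself vanish before $\epsilon$ by continuity. One also has to be precise about the sense in which the zero-extended $g_\epsilon$ is the minimum of $\mathcal{S}$: solutions lying strictly above $g_\epsilon$ persist past $\beta$, whereas those below it leave $U$ prematurely, so the infimum of the genuinely positive solutions is attained only after invoking the zero-convention. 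Verifying that the non-crossing property and the limiting boundary behaviour remain valid up to the endpoints $\log(K)$ and $\epsilon\wedge\beta$, where $F$ may degenerate, is the only genuinely technical point; away from these endpoints the laminar ordering of solutions is immediate.
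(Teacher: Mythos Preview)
Your proposal is correct and matches the paper's (implicit) approach: the paper gives no separate proof of this lemma, treating it as an immediate consequence of the ODE classification in Lemma~\ref{ode1} together with Theorem~\ref{main_result}, which is exactly the reduction you perform. Your squeeze-and-uniqueness argument is a clean way to extract $g_\epsilon^*=g_\epsilon$; the proof of Lemma~\ref{ode1} in fact gives a slightly more direct route (any solution strictly below $g_\epsilon$ hits zero before $\epsilon$ and so cannot belong to $\mathcal{S}$), but the content is identical, and you have correctly flagged the only delicate point, namely the case $\beta<\epsilon$ where membership of the zero-extended $g_\epsilon$ in $\mathcal{S}$ relies on the convention stated after Lemma~\ref{ode1}.
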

In the case that there is a cap, it cannot happen that the value function becomes infinite. This changes when there is no cap. 
\begin{lem}\label{max_pr_2}
Let $q>0$ and $\epsilon=\infty$.
\begin{enumerate}
\item\label{one_direction} Let $g_\infty^*$ denote the minimal solution to~\eqref{aa} which does not hit zero (whenever such a solution exists). Then the solution to~\eqref{problem1} is given by~\eqref{value_function_1} and~\eqref{opt_st_time_1} with $g_\infty$ replaced by $g_\infty^*$.
\item\label{other_direction} If every solution to~\eqref{aa} hits zero, then the value function in~\eqref{problem1} is given by $V_\infty^*(x,s)\equiv\infty$.
\end{enumerate}
\end{lem}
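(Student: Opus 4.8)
The plan is to treat Lemma~\ref{max_pr_2} as a reformulation of Theorem~\ref{main_result_1} and Lemma~\ref{special_cases_2}\eqref{second_part}, the link being a qualitative study of the family of solutions of~\eqref{aa} on $(\log(K),\infty)$. First I would record that the right-hand side of~\eqref{aa} is locally Lipschitz in $H$ on $U$ (since $\W{q}{\cdot}\in C^1(0,\infty)$ and is strictly positive there), so there is a unique maximal solution through each point and, by comparison, two distinct solutions never cross. Hence the solutions are totally ordered; the subfamily of \emph{non-hitting} solutions, i.e.\ those staying strictly positive on all of $(\log(K),\infty)$, is then upward closed, and whenever it is nonempty its pointwise infimum $g_\infty^*$ is again a solution of~\eqref{aa} by continuous dependence on the data.

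Next I would tie the dichotomy of the statement to the sign of $q-\psi(1)$. Rewrite~\eqref{aa} as $g'(s)=1-(1-Ke^{-s})^{-1}R(g(s))$, where $R(H):=\Z{q}{H}/(q\W{q}{H})$. Using the change of measure~\eqref{scale5}, the function $e^{-\Phi(q)H}\W{q}{H}=W_{\Phi(q)}(H)$ is increasing to $1/\psi'(\Phi(q))$, while $e^{-\Phi(q)H}\Z{q}{H}$ is decreasing (its derivative equals $-e^{-\Phi(q)H}(\Phi(q)\Z{q}{H}-q\W{q}{H})<0$); consequently $R$ is strictly decreasing on $(0,\infty)$ with $R(H)\downarrow 1/\Phi(q)$ as $H\to\infty$, so in particular $R(H)>1/\Phi(q)$ for all $H>0$. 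If $q\le\psi(1)$, then $\Phi(q)\le 1$ and $R(H)>1/\Phi(q)\ge 1>1-Ke^{-s}$ for all admissible $H,s$, so every solution has strictly negative slope throughout and, the slope being negative and bounded away from $0$ near $g=0$, reaches zero at a finite level. If instead $q>\psi(1)$, then $\Phi(q)>1$ and the large-height slope $1-(1-Ke^{-s})^{-1}/\Phi(q)$ is positive for all large $s$, so a solution taking a large enough value at some large $s$ increases thereafter and never returns to zero. This yields the equivalences: a non-hitting solution exists $\iff q>\psi(1)$, and every solution hits zero $\iff 0<q\le\psi(1)$, so the two cases are exhaustive and mutually exclusive.

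The two parts now follow. In part~\eqref{other_direction} the hypothesis forces $0<q\le\psi(1)$, which is exactly the regime of Lemma~\ref{special_cases_2}\eqref{second_part}, giving $V_\infty^*\equiv\infty$. In part~\eqref{one_direction} the existence of a non-hitting solution places us in the regime $q>\psi(1)=0\vee\psi(1)$, so Theorem~\ref{main_result_1} applies and yields~\eqref{value_function_1} and~\eqref{opt_st_time_1} with the solution $g_\infty$ of Lemma~\ref{ode1}\eqref{beh_4}--\eqref{beh_5}; it remains to identify $g_\infty$ with the minimal non-hitting solution $g_\infty^*$. I would argue that $g_\infty^*$ is the separatrix between the blow-up solutions (lying above it) and the solutions hitting zero (lying below it), so that its boundary behaviour is forced to coincide with the one characterising $g_\infty$ in Lemma~\ref{ode1}, namely $g_\infty^*(s)\to k^*$ as $s\uparrow\infty$ when $\W{q}{0+}<q^{-1}$, and $g_\infty^*(s)\to 0$ as $s\uparrow\beta$ when $\W{q}{0+}\ge q^{-1}$. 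The uniqueness of the solution with that boundary behaviour in Lemma~\ref{ode1} then gives $g_\infty^*=g_\infty$.

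The main obstacle is this identification $g_\infty^*=g_\infty$ in the bounded-variation borderline regime $\W{q}{0+}\ge q^{-1}$ (Lemma~\ref{ode1}\eqref{beh_5}) with $\mathtt{d}<q$, where the separatrix $g_\infty$ actually reaches zero at the finite level $\beta=\log(K(1-\mathtt{d}/q)^{-1})$ with zero slope; there the infimum of the non-hitting solutions is itself a solution that hits zero, and ``the minimal solution which does not hit zero'' must be read as this infimal separatrix rather than a literal non-hitting solution. To make this rigorous I would use that here $R(0+)=\mathtt{d}/q$, so $R(H)<\mathtt{d}/q$ for $H>0$, together with $1-Ke^{-\beta}=\mathtt{d}/q$, to conclude that $g'>0$ for all $s\ge\beta$ while the nullcline $R(H)=1-Ke^{-s}$ is repelling and descends to the point $(\beta,0)$. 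It then follows that solutions lying above $g_\infty$ survive past $\beta$ and blow up, whereas those below hit zero before $\beta$; this pins $g_\infty$ down as the infimum of the non-hitting family, legitimises the identification $g_\infty^*=g_\infty$, and lets Theorem~\ref{main_result_1} conclude.
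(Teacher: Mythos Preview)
Your approach is correct and is precisely what the paper intends: Lemma~\ref{max_pr_2} is presented without a separate proof because it is a direct reformulation of Theorem~\ref{main_result_1} and Lemma~\ref{special_cases_2}\eqref{second_part} via the trichotomy of solutions established in Lemma~\ref{ode1}, which is exactly the route you take. The subtlety you flag in the regime $\W{q}{0+}\geq q^{-1}$ with $\mathtt{d}<q$ is genuine---the infimum of the non-hitting family is indeed the separatrix $g_\infty$ of Lemma~\ref{ode1}\eqref{beh_5}, which itself reaches zero at $\beta$---and your reading of ``minimal non-hitting solution'' as this infimal separatrix is the correct one.
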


\begin{rem}
\mbox{ }
\begin{enumerate}
\item We select the minimal solution rather than the maximal one as in~\cite{maximality_principle}, since our functions $g_\epsilon(s)$ are the analogue of $s-g_\epsilon(s)$ in~\cite{maximality_principle}.
\item The ``right'' boundary conditions which were used to select $g_\epsilon$ and $g_\infty$ from the class of solutions of~\eqref{aa} (see Section~\ref{general_observation}) are not used in the formulation of Lemmas~\ref{max_pr_1} and~\ref{max_pr_2}. In fact, by choosing the minimal solution, it follows as a consequence that $g_\epsilon^*$ and $g_\infty^*$ have exactly the ``right'' boundary conditions. Put differently, the ``minimality principle'' is a means of selecting the ``good'' solution from the class of all solutions of~\eqref{aa}. This is merely a reformulation of~\cite{maximality_principle} in our specific setting. 
\item A similar observation  is contained in~\cite{cox}, but in  a slightly different setting.
\item If $\epsilon=\infty$, the solutions to~\eqref{aa} that hit zero correspond to the so-called ``bad-good'' solutions in~\cite{maximality_principle}; ``bad'' since they do not give the optimal boundary, ``good'' as they can be used to approximate the optimal boundary.
\end{enumerate}
\end{rem}

\section{Guess via principle of smooth and continuous fit}\label{candidate_solution}
Our proofs are essentially based on a ``guess and verify'' technique. Here we provide the missing details from Section~\ref{general_observation} on how to ``guess'' a candidate solution. The following presentation is an adaptation of the argument of Section 3 of~\cite{maximality_principle} to our setting.\\
\indent Assume that $q>0$ and $\epsilon\in(\log(K),\epsilon)$. Let $g_\epsilon:(\log(K),\epsilon)\rightarrow(0,\infty)$ be continuously differentiable and define the stopping time $\tau_{g_\epsilon}$ as in~\eqref{expect} and let $V_{g_\epsilon}$ be as in~\eqref{val}. For simplicity assume from now on that $X$ is of unbounded variation (if $X$ is of bounded variation  a similar argument based on the principle of continuous fit applies, see~\cite{pes_shir,some_remarks,peskir} ). From the general theory of optimal stopping, \cite{peskir, optimal_stopping_rules}, we would expect that $V_{g_\epsilon}$ satisfies for $(x,s)\in E$ such that $\log(K)<s<\epsilon$ the system
\begin{eqnarray}
&\Gamma V_{g_\epsilon}(x,s)=qV_{g_\epsilon}(x,s)&\text{for $s-g_\epsilon(s)<x<s$ with $s$ fixed},\notag\\
&\frac{\partial V_{g_\epsilon}}{\partial s}(x,s)\big\vert_{x=s-}=0&\text{(normal reflection),}\label{system_1}\\
&V_{g_\epsilon}(x,s)\vert_{x=(s-g_\epsilon(s))+}=e^s-K&\text{(instantaneous stopping),}\notag
\end{eqnarray} 
where $\Gamma$ is the infinitesimal generator of the process $X$ under $\P$. In addition, the principle of smooth fit (cf.~\cite{mikhalevich,peskir}) suggests that the system above should be complemented by
\begin{equation}
\frac{\partial V_{g_\epsilon}}{\partial x}(x,s)\big\vert_{x=(s-g_\epsilon(s))+}=0\quad\text{(smooth fit),}\label{s_fit}
\end{equation}
Note that the smooth fit condition is not necessarily part of the general theory, it is imposed since by the ``rule of thumb'' outlined in Section 7 in~\cite{some_remarks} one suspects it should hold in this setting because of path regularity. This belief will be vindicated when we show that system~\eqref{system_1} and~\eqref{s_fit} leads to the desired solution.
Applying the strong Markov property at $\tau^+_s$ and using~\eqref{scale1} and~\eqref{scale2} shows that
\begin{eqnarray*}
V_{g_\epsilon}(x,s)
&=&(e^s-K)\bigg(\Z{q}{x-s+g_\epsilon(s)}-\W{q}{x-s+g_\epsilon(s)}\frac{\Z{q}{g_\epsilon(s)}}{\W{q}{g_\epsilon(s)}}\bigg)\\
&&+\frac{\W{q}{x-s+g_\epsilon(s)}}{\W{q}{g_\epsilon(s)}}V_{g_\epsilon}(s,s).
\end{eqnarray*}
Furthermore, the smooth fit condition~\eqref{s_fit} implies
\begin{eqnarray*}
0&=&\lim_{x\downarrow s-g_\epsilon(s)}\frac{\partial V_{g_\epsilon}}{\partial x}(x,s)\\
&=&\lim_{x\downarrow s-g_\epsilon(s)}\frac{\dW{q}{x-s+g_\epsilon(s)}}{\W{q}{g_\epsilon(s)}}\big(V_{g_\epsilon}(s,s)-(e^s-K)\Z{q}{g_\epsilon(s)}\big).
\end{eqnarray*}
By~\eqref{derivativeatorigin} the first factor tends to a strictly positive value or infinity which shows that $V_{g_\epsilon}(s,s)=(e^s-K)\Z{q}{g_\epsilon(s)}$. This would mean that for all \mbox{$(x,s)\in E$} such that $\log(K)<s<\epsilon$ we have
\begin{equation}
V_{g_\epsilon}(x,s)=(e^s-K)\Z{q}{x-s+g_\epsilon(s)}.\label{candidateV}
\end{equation}
Finally, using the normal reflection condition shows that our candidate function $g_\epsilon$ should satisfy the first-order differential equation
\begin{equation}
g_\epsilon^\prime(s)=1-\frac{e^s\Z{q}{g_\epsilon(s)}}{(e^s-K)q\W{q}{g_\epsilon(s)}}\quad\text{on $(\log(K),\epsilon)$}.\label{ode0}
\end{equation}

\section{Example}\label{example}
Suppose that $X_t=(\mu-\frac{1}{2}\sigma^2)t+\sigma W_t$, where $\mu\in\R,\sigma>0$ and $(W_t)_{t\geq 0}$ is a standard Brownian motion. It is well-known that in this case the scale functions are given by
\begin{eqnarray*}
\W{q}{x}=\frac{2}{\sigma^2\delta}e^{\gamma x}\sinh(\delta x)\quad\text{and}\quad\Z{q}{x}=e^{\gamma x}\cosh(\delta x)-\frac{\gamma}{\delta}e^{\gamma x}\sinh(\delta x),
\end{eqnarray*}
on $x\geq 0$, where $\delta(q)=\delta=\sqrt{(\frac{\mu}{\sigma^2}-\frac{1}{2})^2+\frac{2q}{\sigma^2}}$ and $\gamma=\frac{1}{2}-\frac{\mu}{\sigma^2}$. Additionally, \mbox{let $\gamma_1:=\gamma-\delta$} and $\gamma_2:=\gamma+\delta=\Phi(q)$ both of which are the roots of the quadratic equation $\frac{\sigma^2}{2}\theta^2+(\mu-\frac{\sigma^2}{2})\theta-q=0$ and satisfy $\gamma_2>0>\gamma_1$. Using the specific form of $Z^{(q)}$ and $W^{(q)}$ it straightforward to obtain the following result.
\begin{lem}\label{consistency}
Let $\epsilon=\infty$ and assume that $q>\psi(1)$ or, equivalently, $q>\mu$. Then the solution to~\eqref{problem1} is given by
\begin{equation*}
V_\infty^*(x,s)=\begin{cases}
e^s-K,&(x,s)\in D^*,\\
\frac{e^s-K}{\gamma_2-\gamma_1}\big(\gamma_2e^{\gamma_1(x-s+g_\infty(s))}-\gamma_1e^{\gamma_2(x-s+g_\infty(s))}\big),&(x,s)\in C^*_I\\
e^{-\gamma_2(\log(K)-x)}\frac{\gamma_1}{\gamma_1-\gamma_2}A_\infty,&(x,s)\in C^*_{II},
\end{cases}
\end{equation*}
where $A_\infty=\lim_{s\downarrow\log(K)}(e^s-K)e^{\gamma_2g_\infty(s)}$. The corresponding optimal strategy is given by $\tau_\infty^*:=\inf\{t>0:\overline X_t-X_t\geq g_\infty(\overline X_t)\text{ and }\overline X_t>\log(K)\}$, where $g_\infty$ is the unique strictly positive solution to the differential equation
\begin{equation*}
g_\infty^\prime(s)=1-\frac{e^s}{e^s-K}\bigg(\frac{\gamma_2^{-1}e^{\gamma_2g_\infty(s)}-\gamma_1^{-1}e^{\gamma_1g_\infty(s)}}{e^{\gamma_2g_\infty(s)}-e^{\gamma_1g_\infty(s)}}\bigg)\quad\text{on }(\log(K),\infty)
\end{equation*}
such that $\lim_{s\uparrow\infty}g_\infty(s)=k^*$, where the constant $k^*\in(0,\infty)$ is given by
\begin{equation*}
k^*=\frac{1}{\gamma_2-\gamma_1}\log\bigg(\frac{1-\gamma_1^{-1}}{1-\gamma_2^{-1}}\bigg).
\end{equation*}
\end{lem}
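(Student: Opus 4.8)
The plan is to derive everything as a specialisation of the general results in Lemma~\ref{ode1}\eqref{beh_4} and Theorem~\ref{main_result_1}; since $X$ here is a linear Brownian motion the hypotheses are immediate and the remaining work is purely algebraic. First I would rewrite the scale functions in exponential form. Expanding $\sinh$ and $\cosh$ and using $\gamma_1=\gamma-\delta$, $\gamma_2=\gamma+\delta$ and $\gamma_2-\gamma_1=2\delta$ gives, for $x\geq 0$,
\[
\W{q}{x}=\frac{1}{\sigma^2\delta}\big(e^{\gamma_2 x}-e^{\gamma_1 x}\big),\qquad
\Z{q}{x}=\frac{1}{\gamma_2-\gamma_1}\big(\gamma_2 e^{\gamma_1 x}-\gamma_1 e^{\gamma_2 x}\big).
\]
Since $X$ is of unbounded variation we have $\W{q}{0+}=0<q^{-1}$, and for this process $\psi(1)=\mu$, so the assumption $q>\psi(1)$ reads exactly $q>\mu$ and places us in regime~\eqref{beh_4} of Lemma~\ref{ode1}. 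Hence Theorem~\ref{main_result_1} applies directly, yielding the three-region structure of $V_\infty^*$ and the optimal rule $\tau_\infty^*$ as stated; it remains only to evaluate the pieces explicitly.

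On $E_1$ Theorem~\ref{main_result_1} gives $V_\infty^*(x,s)=(e^s-K)\Z{q}{x-s+g_\infty(s)}$. For $(x,s)\in D^*$ the argument $x-s+g_\infty(s)$ is non-positive, so $\Z{q}{\cdot}=1$ and $V_\infty^*(x,s)=e^s-K$; for $(x,s)\in C^*_I$ the argument is positive and substituting the exponential form of $Z^{(q)}$ produces precisely the displayed expression. For $C^*_{II}$ I would start from $V_\infty^*(x,s)=e^{-\Phi(q)(\log(K)-x)}\lim_{s\downarrow\log(K)}(e^s-K)\Z{q}{g_\infty(s)}$ with $\Phi(q)=\gamma_2$. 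Because all solutions tend to $+\infty$ as $s\downarrow\log(K)$ (Lemma~\ref{ode1}) and $\gamma_2>0>\gamma_1$, the term $e^{\gamma_1 g_\infty(s)}$ is negligible and $\Z{q}{g_\infty(s)}\sim\frac{\gamma_1}{\gamma_1-\gamma_2}e^{\gamma_2 g_\infty(s)}$; this is exactly how the constant $\frac{\gamma_1}{\gamma_1-\gamma_2}$ and the simpler limit $A_\infty=\lim_{s\downarrow\log(K)}(e^s-K)e^{\gamma_2 g_\infty(s)}$ in the statement arise.

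Finally I would reduce the differential equation and compute $k^*$. Vieta's formula for the roots of $\tfrac{\sigma^2}{2}\theta^2+(\mu-\tfrac{\sigma^2}{2})\theta-q=0$ gives $\gamma_1\gamma_2=-2q/\sigma^2$, whence $q/(\sigma^2\delta)=-\gamma_1\gamma_2/(\gamma_2-\gamma_1)$ and
\[
\frac{\Z{q}{g}}{q\W{q}{g}}=\frac{\gamma_2^{-1}e^{\gamma_2 g}-\gamma_1^{-1}e^{\gamma_1 g}}{e^{\gamma_2 g}-e^{\gamma_1 g}},
\]
which turns the ODE of Lemma~\ref{ode1} into the stated one. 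For $k^*$ I would solve $\Z{q}{s}-q\W{q}{s}=0$; with the same substitution this collapses to $e^{(\gamma_1-\gamma_2)k}=\gamma_1(1-\gamma_2)/\big(\gamma_2(1-\gamma_1)\big)$, and taking logarithms (using $\gamma_2-\gamma_1>0$) and rewriting the ratio as $(1-\gamma_1^{-1})/(1-\gamma_2^{-1})$ yields the closed form for $k^*$. The argument is entirely computational; the only step requiring a little care is the asymptotic extraction of $\frac{\gamma_1}{\gamma_1-\gamma_2}$ on $C^*_{II}$, where one must invoke $g_\infty(s)\to\infty$ as $s\downarrow\log(K)$ to discard the $e^{\gamma_1 g_\infty(s)}$ term.
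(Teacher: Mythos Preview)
Your proposal is correct and follows exactly the route the paper indicates: the paper merely states that ``using the specific form of $Z^{(q)}$ and $W^{(q)}$ it is straightforward to obtain the following result,'' and your argument is precisely that straightforward specialisation of Theorem~\ref{main_result_1} and Lemma~\ref{ode1}\eqref{beh_4}. The only detail you supply beyond the paper is the asymptotic extraction on $C^*_{II}$, and your justification there (using $g_\infty(s)\to\infty$ and $\gamma_1<0$ to kill the $e^{\gamma_1 g_\infty(s)}$ term) is sound.
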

\noindent Lemma~\ref{consistency} is nothing other than Theorem 2.5 of~\cite{pedersen} or Theorem 1 of~\cite{guo_shepp} which shows that our results are consistent with the existing literature.

\section{Proof of main results}\label{proofs}
\begin{proof}[Proof of Lemma~\ref{ode1}]
We distinguish three cases:
\begin{itemize}
\item $q>\psi(1)$ and $\W{q}{0+}<q^{-1}$,
\item $\W{q}{0+}\geq q^{-1}$ (and hence $q>\psi(1)$, see beginning of Subsection~\ref{sub_ode}),
\item $\psi(1)\geq q>0$.
\end{itemize}
\subsection*{The case $q>\psi(1)$ and $\W{q}{0+}<q^{-1}$}
The assumptions imply that the function $H\mapsto\Z{q}{H}-q\W{q}{H}$ is strictly decreasing on $(0,\infty)$ and has a unique root $k^*\in(0,\infty)$ (cf. Proposition 2.1 of~\cite{maximum_process}). In particular, $\frac{\Z{q}{H}}{q\W{q}{H}}>1$ for $H<k^*$, $\frac{\Z{q}{H}}{q\W{q}{H}}<1$ for $H>k^*$ and $\frac{\Z{q}{k^*}}{q\W{q}{k^*}}=1$. It is also known that the mapping  $H\mapsto\frac{\Z{q}{H}}{q\W{q}{H}}$ is strictly decreasing on $(0,\infty)$ (cf. first Remark in Section 3 of~\cite{pist}) and that $\lim_{H\to\infty}\frac{\Z{q}{H}}{q\W{q}{H}}=\Phi(q)^{-1}$ (cf. Lemma 1 of~\cite{exitproblems}). We will make use of these properties below.\\
\indent The ordinary differential equation~\eqref{aa} has, at least locally, a unique solution for every starting point  $(s_0,H_0)\in U$ by the Picard-Lindel\"of theorem (cf. Theorem 1.1 in~\cite{hartman}), on account of local Lipschitz continuity of the field. It is well-known that these unique local solutions can be extended to their maximal interval of existence (cf. Theorem 3.1 of~\cite{hartman}). Hence, whenever we speak a solution to~\eqref{aa} from now on, we implicitly mean the unique maximal one. In order to analyse~\eqref{aa}, we sketch its direction field based on various qualitative features of the ODE. The $0$-isocline, that is, the points $(s,H)$ in $U$ satisfying \mbox{$1-\frac{e^s\Z{q}{H}}{(e^s-K)q\W{q}{H}}=0$}, is given by the graph of
\begin{equation}
f(H)=\log\bigg(K\bigg(1-\frac{\Z{q}{H}}{q\W{q}{H}}\bigg)^{-1}\bigg),\quad H\in(k^*,\infty).\label{def_of_f}
\end{equation} 
Using analytical properties of the map $H\mapsto \Z{q}{H}/(q\W{q}{H})$ given at the beginning of the paragraph above, one deduces that $f$ is strictly decreasing on $(k^*,\infty)$ and that $\eta:=\lim_{H\uparrow\infty}f(H)=\log(K(1-\Phi(q)^{-1})^{-1})$ and $\lim_{H\downarrow k^*}f(H)=\infty$. Moreover, the inverse of $f$, which exists due to the strict monotonicity of $f$, will be denoted by $f^{-1}$. Using the $0$-isocline and what was said in the paragraph above, we obtain qualitatively the direction field shown in Fig.~\ref{direction_fields_1}.
\begin{figure}[h]
\begin{center}
\includegraphics[scale=0.48]{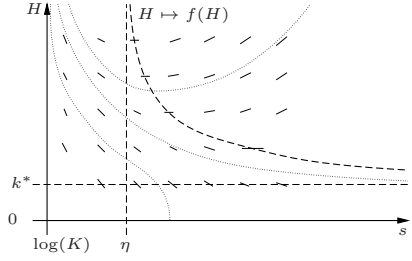}
\caption{A qualitative picture of the direction field when $q>0\vee\psi(1)$ and $\W{q}{0+}=0$. The case when $\W{q}{0+}\in(0,q^{-1})$ is similar except that the solutions (finer line) hit zero with finite slope instead of infinite slope (since $\W{q}{0+}>0$).}\label{direction_fields_1}
\end{center}
\end{figure}

\indent We continue by investigating two types of solutions. Let $s_0>\log(K)$ and let $g(s)$ be the solution such that $g(s_0)=k^*$ which is defined on the maximal interval of existence, say $I_g$, of $g$. From the specific form of the direction field and the fact that solutions tend to the boundary of $U$ (cf. Theorem 3.1 of~\cite{hartman}), we infer that $I_g=(\log(K),\tilde s)$ for some $\tilde s>s_0$, $\lim_{s\uparrow\tilde s}g(s)=0$ and $\lim_{s\downarrow\log(K)}g(s)=\infty$. In other words, the solutions of~\eqref{aa} which intersect the horizontal line $H=k^*$ come from infinity and eventually hit zero (with infinite gradient if $\W{q}{0+}=0$ and with finite gradient if \mbox{$\W{q}{0+}\in(0,q^{-1})$}). Next, suppose that $s_0>\eta$ and let $g(s)$ be the solution such that $g(s_0)=f^{-1}(s_0)$. Similarly to above, we conclude that $I_g=(\log(K),\infty)$, $\lim_{s\uparrow\infty}g(s)=\infty$ and $\lim_{s\downarrow\log(K)}g(s)=\infty$. Put differently, every solution that intersects the $0$-isocline comes from infinity and tends to infinity.\\
\indent Let $\mathcal{S}^-$ be the set of solutions of~\eqref{aa} whose range contains the value $k^*$ and $\mathcal{S}^+$ the set of solutions of~\eqref{aa} whose graph $s\mapsto g(s)$ intersects the $0$-isocline (see Fig.~\ref{direction_fields_1}). Both these sets are non-empty as explained in the previous paragraph. For fixed $s^*>\eta$ define
\begin{eqnarray*}
&&H^*_-:=\sup\{H\in(0,\infty)\,\vert\,\text{there exists $g\in\mathcal{S}^-$ such that $g(s^*)=H$}\}.\\
&&H^*_+:=\inf\{H\in(0,\infty)\,\vert\,\text{there exists $g\in\mathcal{S}^+$ such that $g(s^*)=H$}\}.
\end{eqnarray*} 
It follows that $k^*\leq H^*_-\leq H^*_+\leq f^{-1}(s^*)$ and we claim that $H^*_-=H^*_+$. Suppose this was false and choose $H_1,H_2$ such that $H^*_-<H_1<H_2<H^*_+$. Denote by $g_1$ the solution to~\eqref{aa} such that $g_1(s^*)=H_1$ and by $g_2$ the solutions of~\eqref{aa} such that $g(s^*)=H_2$. Both these solutions must lie between the $0$-isocline and the horizontal line $H=k^*$. In particular, it holds that $I_{g_1}=I_{g_2}=(\log(K),\infty)$ and
\begin{equation}
\lim_{s\to\infty}g_1(s)=\lim_{s\to\infty}g_2(s)=k^*.\label{contradiction}
\end{equation}
Furthermore, set $F(s,H):=1-\frac{e^s\Z{q}{H}}{(e^s-K)q\W{q}{H}}$ for $(s,H)\in U$ and observe that, from earlier remarks,  for fixed $s$, it is an increasing function in $H$. Using this and the fact that $g_1(s)<g_2(s)$ for all $s>\log(K)$ we may write (using the equivalent integral formulation of~\eqref{aa})
\begin{equation*}
g_2(s)-g_1(s)=H_2-H_1+\int_{s^*}^sF(u,g_2(u))-F(u,g_1(u))\,du\geq H_2-H_1>0
\end{equation*}
for $s>\log(K)$. This contradicts~\eqref{contradiction} and hence $H^*_-=H^*_+$. Denote by $g_\infty$ be the solution to~\eqref{aa} such that $g_\infty(s^*)=H^*_-$. By construction, $g_\infty$ lies above all the solutions in $\mathcal{S}^-$ and below all the solutions in $\mathcal{S}^+$. In particular, $I_{g_\infty}=(\log(K),\infty)$ and $\lim_{s\to\infty}g_\infty(s)=k^*$.\\
\indent So far we have found that there are (at least) three types of solutions of~\eqref{aa} and, in fact, there are no more, i.e., any solution to~\eqref{aa} either lies in $\mathcal{S}^-\cup\mathcal{S}^+$ or coincides with $g_\infty$. To see this, note that the graph of $g_\infty$ splits $U$ into two disjoint sets. If $(s,H)\in U$ lies above the graph of $g_\infty$, then the specific form of the field implies that the solution, $g$ say, through $(s,H)$ must intersect the vertical line $s=s^*$ and $g(s^*)>H^*_+$; thus $g\in\mathcal{S}^+$. Similarly, one may deduce that the solution through a point lying below the graph of $g_\infty$ must intersect the horizontal line $H=k^*$ and therefore lies in $\mathcal{S}^-$.\\
\indent Finally, we claim that given $\epsilon>\log(K)$, there exists a unique solution $g_\epsilon$ of~\eqref{aa} such that $I_{g_\epsilon}=(\log(K),\epsilon)$ and $\lim_{s\uparrow\epsilon}g_\epsilon(s)=0$. Indeed, define the sets
\begin{eqnarray*}
s_\epsilon^+&:=&\sup\{s\in(\log(K),\infty)\,\vert\,g\in\mathcal{S}^-\text{ s.t. }I_g\subsetneq(\log(K),\epsilon)\text{ and }g(s)=k^*\},\\
s_\epsilon^-&:=&\inf\{s\in(\log(K),\infty)\,\vert\,g\in\mathcal{S}^-\text{ s.t. }(\log(K),\epsilon)\subsetneq I_g\text{ and }g(s)=k^*\}.
\end{eqnarray*} 
One can then show by a similar argument as above that $s^-_\epsilon=s^+_\epsilon$. The solution through $s^*_+$, denoted $g_\epsilon$, is then the desired one.\\
\indent This whole discussion is summarised pictorially in Fig.~\ref{all_pic_1}.
\subsection*{The case $\W{q}{0+}\geq q^{-1}$}
Similarly to the first case, one sees that under the current assumptions it is still true that $f$ is strictly decreasing on $(0,\infty)$ and $\eta:=\lim_{H\uparrow\infty}f(H)=\log(K(1-\Phi(q)^{-1})^{-1})$. Moreover, recalling that $\W{q}{0+}=\mathtt{d}^{-1}$, one deduces that $\lim_{H\downarrow 0}f(H)=\beta$, where
\begin{equation*}
\beta:=\log(K(1-\mathtt{d}/q)^{-1})\in(0,\infty].
\end{equation*} Analogously to the first case, one may use this information to qualitatively draw the direction field which is shown in Fig.~\ref{direction_fields_2}.
\begin{figure}[h]
\begin{center}
\includegraphics[scale=0.48]{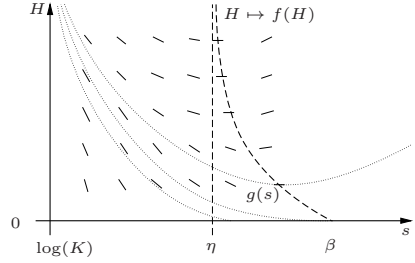}
\caption{A qualitative picture of the direction field when $\W{q}{0+}\geq q^{-1}$. The constants $\eta$ and $\beta$ are given by $\eta=\log(K(1-1/\Phi(q))^{-1})$ and $\beta=\log(K(1-\mathtt{d}/q)^{-1})$. }\label{direction_fields_2}
\end{center}
\end{figure}

\indent As in the first case, one may show that there are again three types of solutions; the ones that intersect the $0$-isocline ($H\mapsto f(H)$) and never hit zero, the ones that hit zero before $\beta$ and the one which lies in between the other two types. One may also show that for a given $\epsilon\in(\log(K),\infty)$ there exists a unique solution $g_\epsilon$ such that $I_{g_\epsilon}=(\log(K),\epsilon\wedge\beta)$ and $\lim_{s\to\epsilon\wedge\beta}g_\epsilon(s)=0$. This is pictorially displayed in Fig.~\ref{all_pic_2}.
 
\subsection*{The case $\psi(1)\geq q>0$}
Under this assumption it holds that $\Phi(q)\leq 1$ which together with equation (8.6) of~\cite{kyprianou} implies that
\begin{equation*}
\Z{q}{H}-q\W{q}{H}\geq\Z{q}{H}-\frac{q}{\Phi(q)}\W{q}{H}>0
\end{equation*}
for $H>0$. This in turn means that $\Z{q}{H}/q\W{q}{H}>1$ for $H>0$. One may again draw the direction field and argue along the same line as above to deduce that all solutions of~\eqref{aa} are strictly decreasing, escape to infinity and hit zero (with infinite gradient if $\W{q}{0+}=0$ and with finite gradient if $\W{q}{0+}\in(0,q^{-1})$).  Again, an argument as in the first case shows that for a given $\epsilon>\log(K)$ there exists a unique solution $g_\epsilon$ such that $I_{g_\epsilon}=(\log(K),\epsilon)$ and $\lim_{s\to\epsilon}g_\epsilon(s)=0$. This was already pictorially displayed in Fig.~\ref{all_pic_3}.
\end{proof}

\begin{proof}[Proof of Theorem~\ref{main_result}]
The proof consists five of steps~\eqref{cond1}-\eqref{cond5} which will imply the result. Before we go through these steps, recall that
\begin{equation}
\limsup_{t\to\infty}e^{-qt}(e^{\overline X_t\wedge\epsilon}-K)=0\quad\P_{x,s}\text{-a.s.}\label{limsup_1}
\end{equation}
for $(x,s)\in E$ and let $\tau^*_\epsilon$ be given as in~\eqref{opt_st_time}. Moreover, define the function
\begin{equation*}
V_\epsilon(x,s):=(e^{s\wedge\epsilon}-K)\Z{q}{x-s+g_\epsilon(s)}
\end{equation*}
for $(x,s)\in E_1=\{(x,s)\in E:s>\log(K)\}$. We claim that
{\renewcommand{\theenumi}{\roman{enumi}}
\renewcommand{\labelenumi}{(\theenumi)}
\begin{enumerate}
\item\label{cond1} $\E_{x,s}[e^{-qt}V_\epsilon(X_t,\overline X_t)]\leq V_\epsilon(x,s)$ for $(x,s)\in E_1$,
\item\label{cond2} $V_\epsilon(x,s)=\E_{x,s}\big[e^{-q\tau^*_\epsilon}(e^{\overline X_{\tau^*_\epsilon}\wedge \epsilon}-K)\big]$ for $(x,s)\in E_1$.
\end{enumerate}}

\subsection*{Verification of~\eqref{cond1}}
We first prove~\eqref{cond1} under the assumption that $X$ is of unbounded variation, that is, $\W{q}{0+}=0$. To this end, let $\Gamma$ be the infinitesimal generator of $X$ and formally define the  function $\Gamma Z^{(q)}:\R\rightarrow\R$ by
\begin{eqnarray*}
\Gamma\Z{q}{x}&:=&-\gamma\dZ{q}{x}+\frac{\sigma^2}{2}\ddZ{q}{x}\\
&&+\int_{(-\infty,0)}\big(\Z{q}{x+y}-\Z{q}{x}-y\dZ{q}{x}1_{\{y\geq -1\}}\big)\,\Pi(dy).
\end{eqnarray*}
For $x<0$ the quantity $\Gamma\Z{q}{x}$ is well-defined and $\Gamma\Z{q}{x}=0$. However, for $x>0$ one needs to check whether the integral part in $\Gamma\Z{q}{x}$ is well-defined. This is done in Lemma A.1 in the Appendix of~\cite{maximum_process} which shows that this is indeed the case. Moreover, as shown in Section 3.2 of~\cite{pist}, it holds that
\begin{equation*}
\Gamma\Z{q}{x}=q\Z{q}{x},\quad x\in(0,\infty).
\end{equation*}
At zero the second derivative of $Z^{(q)}$ does not exist. In this case we understand $\ddZ{q}{0}:=\lim_{x\uparrow 0}\ddZ{q}{x}=0$ and with this definition we have $\Gamma\Z{q}{0}=0$.

\indent Now fix $(x,s)\in E_1$ and define the semimartingale $Y_t:=X_t-\overline X_t+g_\epsilon(\overline X_t)$. Applying an appropriate version of the It\^o-Meyer formula (cf.~Theorem 71,~Ch.~VI of~\citep{protter}) to $\Z{q}{Y_t}$ yields $\P_{x,s}$-a.s.
\begin{eqnarray*}
\Z{q}{Y_t}&=&\Z{q}{x-s+g_\epsilon(s)}+m_t+\int_0^t\Gamma\Z{q}{Y_u}\,du\\
&&+\int_0^t\dZ{q}{Y_u}(g_\epsilon'(\overline X_u)-1)\,d\overline X_u,
\end{eqnarray*}
where
\begin{eqnarray*}
&&\lefteqn{m_t=\int_{0+}^t\sigma\dZ{q}{Y_{u-}}dB_u+\int_{0+}^t\dZ{q}{Y_{u-}}dX^{(2)}_u}\\
&&+\sum_{0<u\leq t}\Delta\Z{q}{Y_u}-\Delta X_u\dZ{q}{Y_{u-}}1_{\{\Delta X_u\geq-1\}}\\
&&-\int_0^t\int_{(-\infty,0)}\Z{q}{Y_{u-}+y}-\Z{q}{Y_{u-}}-y\dZ{q}{Y_{u-}}1_{\{y\geq -1\}}\,\Pi(dy)du
\end{eqnarray*}
and $\Delta X_u=X_u-X_{u-},\,\Delta\Z{q}{Y_u}=\Z{q}{Y_u}-\Z{q}{Y_{u-}}$. By the boundedness of $Z^{(q)\prime}$ on $(-\infty,g(s)]$ the first two stochastic integrals on the right are zero-mean square-integrable martingales and by the compensation formula (cf.~Corollary 4.6 of~\citep{kyprianou}) the third and fourth term constitute a zero-mean  square-integrable martingale. Next, use stochastic integration by parts for semimartingales (cf.~Corollary 2 of Theorem 22, Ch.~II of~\citep{protter}) to deduce that $\P_{x,s}$-a.s.
\begin{eqnarray}
e^{-qt}V_\epsilon(X_t,\overline X_t)&=&V_\epsilon(x,s)+M_t+\int_0^te^{-qu}(e^{\overline X_u\wedge\epsilon}-K)(\Gamma-q)\Z{q}{Y_u}\,du\notag\\
&&+\int_0^te^{-qu}(e^{\overline X_u\wedge\epsilon}-K)\dZ{q}{Y_u}(g'(\overline X_u)-1)\,d\overline X_u\label{final}\\
&&+\int_0^te^{-qu+\overline X_u}\Z{q}{Y_u}1_{\{\overline X_u\leq\epsilon\}}\,d\overline X_u\notag
\end{eqnarray}
where $M_t=\int_{0+}^te^{-qu}(e^{\overline X_u\wedge\epsilon}-K)\,dm_u$ is a zero-mean square-integrable martingale. The first integral is nonpositive since $(\Gamma-q)\Z{q}{y}\leq 0$ for all $y\in\R$. The last two integrals vanish since the process $\overline X_u$ only increments when $\overline X_u=X_u$ and by definition of $g_\epsilon$. Thus, taking expectations on both sides of~\eqref{final} gives~\eqref{cond1} if $X$ is of unbounded variation.\\
\indent If $\W{q}{0+}\in(0,q^{-1})$ or $\W{q}{0+}\geq q^{-1}$ (X has bounded variation), then the It\^o-Meyer formula is nothing more than an appropriate version of the change of variable formula for Stieltjes integrals and one may obtain~\eqref{cond1} in the same way as above. The only change worth mentioning is that the generator of $X$ takes the form
\begin{equation*}
\Gamma\Z{q}{x}=\mathtt{d}\dZ{q}{x}+\int_{(-\infty,0)}\big(\Z{q}{x+y}-\Z{q}{x}\big)\Pi(dy).
\end{equation*}
The last expression is well-defined and $\Gamma Z^{(q)}$ satisfies all the required properties in the proof by the results in the Appendix of~\cite{maximum_process}. This completes the proof of~\eqref{cond1}.

\subsection*{Verification of~\eqref{cond2}}
Recalling that $(\Gamma-q)\Z{q}{y}=0$ for $y>0$, we see from~\eqref{final} that $\E_{x,s}\big[e^{-q(t\wedge\tau^*_\epsilon)}V(X_{t\wedge\tau^*_\epsilon},\overline X_{t\wedge\tau^*_\epsilon})\big]=V_\epsilon(x,s)$ and hence~\eqref{cond2} follows by dominated convergence.\\ 

Next, recall $A_\epsilon:=\E_{\log(K),\log(K)}[e^{-q\tau^*_\epsilon}(e^{\overline X_{\tau^*_\epsilon}\wedge\epsilon}-K)]\label{defA_eps}$ and note that
\begin{equation*}
A_\epsilon=\lim_{s\downarrow\log(K)}\E_{s,s}[e^{-q\tau^*_\epsilon}(e^{\overline X_{\tau^*_\epsilon}\wedge\epsilon}-K)]=\lim_{s\downarrow\log(K)}(e^s-K)\Z{q}{g_\epsilon(s)},
\end{equation*}
where in the second equality we have used~\eqref{cond2} on p.~\pageref{cond2}. Now extend the definition of the function $V_\epsilon$ to
\begin{equation}
V_\epsilon(x,s)=\begin{cases}(e^{s\wedge\epsilon}-K)\Z{q}{x-s+g_\epsilon(s)},&(x,s)\in E_1,\\
e^{-\Phi(q)(\log(K)-x)}A_\epsilon,&(x,s)\in C^*_{II}.\end{cases}
\end{equation}We claim that
{\renewcommand{\theenumi}{\roman{enumi}}
\renewcommand{\labelenumi}{(\theenumi)}
\begin{enumerate}
\setcounter{enumi}{2}
\item\label{cond3} $V_\epsilon(x,s)\geq (e^{s\wedge\epsilon}-K)^+$ for $(x,s)\in E$,
\item\label{cond4} $\E_{x,s}[e^{-qt}V_\epsilon(X_t,\overline X_t)]\leq V_\epsilon(x,s)$ for $(x,s)\in E$,
\item\label{cond5} $V_\epsilon(x,s)=\E_{x,s}\big[e^{-q\tau^*_\epsilon}(e^{\overline X_{\tau^*_\epsilon}\wedge\epsilon}-K)\big]$ for $(x,s)\in E$.
\end{enumerate}}
\noindent Condition~\eqref{cond3} is clear from the definition of $Z^{(q)}$ and $V_\epsilon$.
\subsection*{Verification of condition~\eqref{cond4}}
In view of~\eqref{cond1}, it is enough to show~\eqref{cond4} for $(x,s)\in C^*_{II}$. In order to prove this, set $Y_t=e^{-qt}V_\epsilon(X_t,\overline X_t)$ and observe that
\begin{equation*}
\E_{\log(K),\log(K)}[Y_t]=\lim_{s\downarrow\log(K)}\E_{s,s}[Y_t]\leq\lim_{s\downarrow\log(K)}V_\epsilon(s,s),
\end{equation*}
where in the inequality we have used~\eqref{cond1}. Combining this with the strong Markov property, we obtain on \mbox{$\{\tau^+_{\log(K)}<\infty\}$} for $(x,s)\in C^*_{II}$,
\begin{eqnarray*}
\E_{x,s}\Big[Y_t\Big\vert\mathcal{F}_{\tau^+_{\log(K)}}\Big]&=&Y_t1_{\{t\leq \tau^+_{\log(K)}\}}\\
&&+e^{-q\tau^+_{\log(K)}}\E_{\log(K),\log(K)}[Y_{t-u}]\big\vert_{u=\tau^+_{\log(K)}}1_{\{t>\tau^+_{\log(K)}\}}\\
&&\leq Y_t1_{\{t\leq \tau^+_{\log(K)}\}}+e^{-q\tau^+_{\log(K)}}Y_{\tau^+_{\log(K)}}1_{\{t>\tau^+_{\log(K)}\}}\\
&&=Y_{t\wedge\tau^+_{\log(K)}}.
\end{eqnarray*}
Hence, taking expectations on both sides and using~\eqref{limsup_1} shows that, for \mbox{$(x,s)\in C^*_{II}$}, we have $\E_{x,s}[Y_t]\leq\E_{x,s}\big[Y_{t\wedge\tau^+_{\log(K)}}\big]$. Since $Y_{t\wedge\tau^+_{\log(K)}}$ is a $\P_{x,s}$-martingale for $(x,s)\in C^*_{II}$ (see (\ref{*})) the inequality in~\eqref{cond4} follows.

\subsection*{Verification of condition~\eqref{cond5}}
By the strong Markov property, Theorem 3.12 of~\cite{kyprianou} and the definition of $A_\epsilon$ and $V_\epsilon$ we have
\begin{equation*}
\E_{x,s}\big[e^{-q\tau^*_\epsilon}(e^{\overline X_{\tau^*_\epsilon}\wedge\epsilon}-K)^+\big]=e^{-\Phi(q)(\log(K)-x)}A_\epsilon=V_\epsilon(x,s)
\end{equation*}
for $(x,s)\in C^*_{II}$. This together with~\eqref{cond3} gives assertion~\eqref{cond5}.\\

We are now in a position to prove Theorem~\ref{main_result}. Inequality~\eqref{cond4} and the Markov property of $(X,\overline X)$ imply that the process $e^{-qt}V_\epsilon(X_t,\overline X_t)$ is a $\P_{x,s}$-supermartingale for $(x,s)\in E$. Using~\eqref{limsup_1},~\eqref{cond3}, Fatou's Lemma in the second inequality and the supermartingale property of $e^{-qt}V_\epsilon(X_t,\overline X_t)$ and Doob's stopping theorem in the third inequality shows that for $\tau\in\mathcal{M}$,
\begin{eqnarray*}
\E_{x,s}\big[e^{-q\tau}(e^{\overline X_{\tau}\wedge \epsilon}-K)\big]&=&\E_{x,s}\big[e^{-q\tau}(e^{\overline X_{\tau}\wedge \epsilon}-K)1_{\{\tau<\infty\}}\big]\\
&\leq&\E_{x,s}\big[e^{-q\tau}V_\epsilon(X_\tau,\overline X_\tau)1_{\{\tau<\infty\}}\big]\\
&\leq&\liminf_{t\to\infty}\E_{x,s}\big[e^{-q(t\wedge\tau)}V_\epsilon(X_{t\wedge\tau},\overline X_{t\wedge\tau})\big]\\
&\leq&V_\epsilon(x,s).
\end{eqnarray*}
This together with~\eqref{cond5} shows that $V^*_\epsilon=V_\epsilon$ and that $\tau^*_\epsilon$ is optimal.
\end{proof}

\begin{proof}[ Proof of Theorem~\ref{main_result_1}]
Recall that under the current assumptions Lemma~\ref{integrability} in the Appendix implies that
\begin{eqnarray}
&&\limsup_{t\to\infty}e^{-qt}(e^{\overline X_t}-K)^+=0\quad\P_{x,s}\text{-a.s.}\label{limsup}\\
&&\E_{x,s}\Big[\sup_{0\leq t<\infty}e^{-qt+\overline X_t}\Big]<\infty\label{inte}
\end{eqnarray}
for $(x,s)\in E$, from which it follows that
\begin{equation*}
\sup_{\tau\in\mathcal{M}}\E_{x,s}\big[e^{-q\tau}(e^{\overline X_\tau}-K)^+\big]<\infty
\end{equation*}
for $(x,s)\in E$. Also, for $\epsilon\in(\log(K),\infty)$, let $V_\epsilon^*$,$A_\epsilon$, $\tau^*_\epsilon$ and $g_\epsilon$ be as in Theorem~\ref{main_result} and $g_\infty,\tau^*_\infty$ as stated in Theorem~\ref{main_result_1}. An inspection of the proof of Lemma~\ref{ode1} and Theorem 3.2 of~\cite{hartman} show that $g_\infty(s)=\lim_{\epsilon\uparrow\infty}g_\epsilon(s)$ for $s>\log(K)$ which in turn implies that $\lim_{\epsilon\uparrow\infty}\tau^*_\epsilon=\tau^*_\infty\text{ $\P_{x,s}$-a.s.}$ for all $(x,s)\in E$. Furthermore, recall $A_\infty:=\E_{\log(K),\log(K)}[e^{-q\tau^*_\infty}(e^{\overline X_{\tau^*_\infty}}-K)]$ and define
\begin{equation*}
V_\infty(x,s):=\begin{cases}(e^s-K)\Z{q}{x-s+g_\infty(s)},&(x,s)\in E_1,\\
e^{-\Phi(q)(\log(K)-x)}A_\infty,&(x,s)\in C^*_{II}.\end{cases}
\end{equation*} 
Now, using~\eqref{limsup},~\eqref{inte} and dominated convergence, we see that
\begin{equation*}
\lim_{\epsilon\to\infty}A_\epsilon=\lim_{\epsilon\to\infty}\E_{\log(K),\log(K)}\big[e^{-q\tau^*_\epsilon}\big(e^{\overline X_{\tau^*_\epsilon}\wedge\epsilon}-K\big)\big]=A_\infty
\end{equation*}
and
\begin{eqnarray*}
A_\infty&=&\lim_{s\downarrow\log(K)}\E_{s,s}\big[e^{-q\tau^*_\infty}\big(e^{\overline X_{\tau^*_\infty}}-K\big)\big]\\
&=&\lim_{s\downarrow\log(K)}\lim_{\epsilon\to\infty}\E_{s,s}\big[e^{-q\tau^*_\epsilon}\big(e^{\overline X_{\tau^*_\epsilon}}-K\big)\big]\\
&=&\lim_{s\downarrow\log(K)}(e^s-K)\Z{q}{g_\infty(s)}.
\end{eqnarray*}
It follows in particular that $V_\infty(x,s)=\lim_{\epsilon\uparrow\infty}V^*_\epsilon(x,s)$ for $(x,s)\in E$. Next, we claim that
{\renewcommand{\theenumi}{\roman{enumi}}
\renewcommand{\labelenumi}{(\theenumi)}
\begin{enumerate}
\item\label{cond1_1} $V_\infty(x,s)\geq (e^s-K)^+$ for $(x,s)\in E$,
\item\label{cond2_1} $\E_{x,s}[e^{-qt}V_\infty(X_t,\overline X_t)]\leq V_\infty(x,s)$ for $(x,s)\in E$,
\item\label{cond3_1} $V_\infty(x,s)=\E_{x,s}\big[e^{-q\tau^*_\infty}(e^{\overline X_{\tau^*_\infty}}-K)\big]$ for $(x,s)\in E$.
\end{enumerate}}
\noindent Condition~\eqref{cond1_1} is clear from the definition of $Z^{(q)}$ and $V_\infty$. To prove~\eqref{cond2_1}, use Fatou's Lemma and~\eqref{cond1} of the proof of Theorem~\ref{main_result} to show that
\begin{eqnarray*}
\E_{x,s}[e^{-qt}V_\infty(X_t,\overline X_t)]&\leq&\liminf_{\epsilon\to\infty}\E_{x,s}[e^{-qt}V^*_\epsilon(X_t,\overline X_t)]\\
&\leq&\liminf_{\epsilon\to\infty}V^*_\epsilon(x,s)\\
&=&V_\infty(x,s)
\end{eqnarray*}
for $(x,s)\in E$. As for~\eqref{cond3_1}, using~\eqref{limsup},~\eqref{inte} and dominated convergence we deduce that
\begin{eqnarray*}
V_\infty(x,s)&=&\lim_{\epsilon\to\infty}V^*_\epsilon(x,s)\\
&=&\lim_{\epsilon\to\infty}\E_{x,s}\big[e^{-q\tau^*_\epsilon}(e^{\overline X_{\tau^*_\epsilon}\wedge\epsilon}-K)\big]\\
&=&\E_{x,s}\big[e^{-q\tau^*_\infty}(e^{\overline X_{\tau^*_\infty}}-K)\big].
\end{eqnarray*} 
for $(x,s)\in E$.
The proof of the theorem is now completed by using~\eqref{cond1_1}-\eqref{cond3_1} in the same way as in the proof of Theorem~\ref{main_result} to show that $V_\infty^*=V_\infty$ and that $\tau^*_\infty$ is optimal.
\end{proof}

\begin{rem}
Instead of proving Theorem~\ref{main_result_1} via a limiting procedure, it would be possible to prove it analogously to Theorem~\ref{main_result} by going through the It\^o-Meyer formula. We chose to present the prove above as it emphasises that the capped version of~\eqref{problem1} $(\epsilon\in(\log(K),\infty))$, is a building block for the uncapped version of~\eqref{problem1} $(\epsilon=\infty)$ rather than an isolated problem in itself.
\end{rem}

\begin{proof}[Proof of Lemma~\ref{special_cases_1}]
First assume that $\psi^\prime(0+)<0$ and fix $(x,s)\in E$ such that $\log(K)\leq s\leq\epsilon$. Since the supremum process $\overline X$ is increasing and there is no discounting, it follows that
\begin{eqnarray*}
V_\infty^*(x,s)=\E_{x,s}\Big[e^{\overline X_{\tau^+_\epsilon}}\Big]-K=\E_{x,s}[e^{\overline X_\infty\wedge\epsilon}]-K=e^x\E_{0,s-x}[e^{\overline X_\infty\wedge(\epsilon-x)}]-K.
\end{eqnarray*}
The fact that $\psi^\prime(0+)<0$ implies that $\sup_{0\leq u<\infty} X_u$ is exponentially distributed with parameter $\Phi(0)>0$ under $\mathbb{P}_0$ (see equation 8.2 in~\cite{kyprianou}). Thus, if $\Phi(0)\neq 1$, one calculates
\begin{equation*}
V_\infty^*(x,s)=e^s+\frac{e^{x\Phi(0)}}{\Phi(0)-1}\big(e^{s(1-\Phi(0))}-e^{\epsilon(1-\Phi(0))}\big)-K.
\end{equation*}
Similarly, if $\Phi(0)=1$, we have $V_\epsilon^*(x,s)=e^s-K+e^x(\epsilon-s)$.\\
\indent On the other hand, if $(x,s)\in E$ such that $s<\log(K)$ then an application of the strong Markov property at $\tau^+_{\log(K)}$ and Theorem 3.12 of~\cite{kyprianou} gives
\begin{eqnarray*}
V_\infty^*(x,s)&=&\E_{x,s}\Big[\Big(e^{\overline X_{\tau^+_\epsilon}}-K\Big)^+\Big]\\
&=&e^{-\Phi(0)(\log(K)-x)}\E_{\log(K),\log(K)}\Big[e^{\overline X_{\tau^+_\epsilon}}-K\Big]
\end{eqnarray*}
The last expression on the right-hand side is known from the computations above and hence the first part of the proof follows.\\
\indent As for the second part, it is well-known that $\psi^\prime(0+)\geq 0$ implies that \mbox{$\P_{x,s}[\tau^+_\epsilon<\infty]=1$} for $(x,s)\in E$ and since there is no discounting the claim follows.
\end{proof}

\begin{proof}[Proof of Lemma~\ref{special_cases_2}]
The first part follows by taking limits in Lemma~\ref{special_cases_1}, since by monotone convergence we have
\begin{equation*}
V_\infty^*(x,s)=\E_{x,s}\big[(e^{\overline X_\infty}-K)^+\big]=\lim_{\epsilon\uparrow\infty}\E_{x,s}\big[(e^{\overline X_{\tau^+_\epsilon}\wedge\epsilon}-K)^+\big]=\lim_{\epsilon\uparrow\infty}V^*_\epsilon(x,s).
\end{equation*}
As for the second part, note that $V^*_\infty(x,s)\geq\lim_{\epsilon\uparrow\infty}V_\epsilon^*(x,s)$ and hence it is enough to show that the limit equals infinity. To this end, observe that under the current assumptions we have $\lim_{\epsilon\uparrow\infty}g_\epsilon(s)=\infty$ for $s>\log(K)$ (see Lemma~\ref{ode1}\eqref{beh_3}). This in conjunction with the fact that $\lim_{z\to\infty}\Z{q}{z}=\infty$ shows that, for $(x,s)\in E$ such that $s>\log(K)$, 
\begin{equation*}
\lim_{\epsilon\to\infty}V_\epsilon^*(x,s)=\lim_{\epsilon\to\infty}(e^{s\wedge\epsilon}-K)\Z{q}{x-s+g_\epsilon(s)}=\infty.
\end{equation*}
On the other hand, if $(x,s)\in E$ such that $s\leq\log(K)$, the claim follows provided that $\lim_{\epsilon\to\infty}A_\epsilon=\infty$. Indeed, using the strong Markov property and Theorem 3.12 of~\cite{kyprianou} one may deduce that
\begin{equation*}
A_\epsilon\geq\E_{\log(K),\log(K)}\big[e^{-q\tau_s^+}1_{\{\tau_s^+<\tau^*_\epsilon\}}\big]V^*_\epsilon(s,s).
\end{equation*} 
The second factor on the right-hand side increases to $+\infty$ as $\epsilon\uparrow\infty$ by the first part of the proof and thus the proof is complete.
\end{proof}

\appendix

\section{An auxiliary result}
\begin{lem}\label{integrability}
If $q>\psi(1)$ we have for $(x,s)\in E$ that 
\begin{equation*}
\E_{x,s}\Big[\sup_{0\leq t <\infty}e^{-qt+\overline X_t}\Big]<\infty.
\end{equation*}
In particular, $\limsup_{t\to\infty}e^{-qt+\overline X_t}=0$ $\P_{x,s}$-a.s. for $(x,s)\in E$.
\end{lem}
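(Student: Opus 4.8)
The plan is to reduce the statement to a supremum over $X$ itself, exploit the exponential martingale $e^{X_t-\psi(1)t}$ together with the hypothesis $q>\psi(1)$, and control the unit-time suprema by Doob's inequality. Throughout I use the standing assumption $q>0$ (which is in force in every application of the lemma).

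\textbf{Reduction.} First I would record the pathwise bound
\[
\sup_{t\ge0}e^{-qt+\overline X_t}\le e^{s}\vee\sup_{u\ge0}e^{-qu+X_u}.
\]
Indeed, fix $t$; if the running maximum at time $t$ is attained by the initial value $s$ then $e^{-qt+\overline X_t}\le e^{s}$ (as $q\ge0$), while if it is attained at some time $u^\ast\le t$ then $e^{-qt+X_{u^\ast}}\le e^{-qu^\ast+X_{u^\ast}}$. Translating by the starting point (write $X_u=x+\tilde X_u$ with $\tilde X$ issued from $0$) this reduces everything to proving $\E_0\big[\sup_{u\ge0}e^{-qu+X_u}\big]<\infty$.

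\textbf{Key bound.} Set $M_u:=\exp(X_u-\psi(1)u)$, which by~\eqref{*} is a unit-mean martingale, and let $\beta:=q-\psi(1)>0$, so that $e^{-qu+X_u}=M_ue^{-\beta u}$. Splitting over unit intervals and using $q\ge0$ gives $\E_0\big[\sup_{u\ge0}e^{-qu+X_u}\big]\le\sum_{n\ge0}e^{-qn}\E_0\big[\sup_{n\le u\le n+1}e^{X_u}\big]$. By the Markov property and stationarity of the increments, $\E_0\big[\sup_{n\le u\le n+1}e^{X_u}\big]=\E_0[e^{X_n}]\,\E_0[e^{\overline X_1}]=e^{n\psi(1)}\E_0[e^{\overline X_1}]$, so the series is geometric with ratio $e^{-\beta}<1$ and converges as soon as $\E_0[e^{\overline X_1}]<\infty$. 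For the latter I would note $e^{X_v}\le e^{(\psi(1))^+}M_v$ for $v\in[0,1]$ and apply Doob's $L^2$-inequality to $M$: $\E_0\big[(\sup_{v\le1}M_v)^2\big]\le4\,\E_0[M_1^2]=4e^{-2\psi(1)}e^{\psi(2)}<\infty$, finiteness of $\psi(2)$ being guaranteed since $X$ is spectrally negative. Hence $\E_0[e^{\overline X_1}]<\infty$ and the first assertion follows.

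\textbf{Almost sure limit and main obstacle.} The finiteness just obtained already gives $\sup_t e^{-qt+\overline X_t}<\infty$ a.s. For the $\limsup$, writing $\overline X_t=\overline X_N\vee\sup_{N\le u\le t}X_u$ for $t\ge N$ yields $e^{-qt+\overline X_t}\le e^{-qt+\overline X_N}\vee\sup_{u\ge N}e^{-qu+X_u}$; letting $t\to\infty$ (the first term vanishes since $q>0$) and then $N\to\infty$ gives $\limsup_{t\to\infty}e^{-qt+\overline X_t}\le\limsup_{u\to\infty}e^{-qu+X_u}$. Finally, the strong law for L\'evy processes gives $X_u/u\to\psi'(0+)$ a.s., and convexity of $\psi$ with $\psi(0)=0$ forces $\psi'(0+)\le\psi(1)<q$, so $e^{-qu+X_u}=\exp\!\big(u(X_u/u-q)\big)\to0$ a.s. I expect the one genuinely substantive step to be $\E_0[e^{\overline X_1}]<\infty$: this is exactly where spectral negativity enters (it makes $\psi(\theta)<\infty$ for all $\theta\ge0$, so $M$ has all moments and Doob applies), and it is what converts the hypothesis $q>\psi(1)$ into a summable geometric bound.
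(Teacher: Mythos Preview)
Your argument is correct, but it proceeds along a genuinely different route from the paper's. The paper bounds the tail probability directly: writing the expectation as $\int_0^\infty\P_{x,s}[\sup_{t\ge0}e^{-qt+\overline X_t}>y]\,dy$, it observes that for $y>e^s$,
\[
\P_{x,s}\Big[\sup_{t\ge0}e^{-qt+\overline X_t}>y\Big]\le\P\big[X_t-qt>\log y-x\text{ for some }t\big]=e^{-\Phi_{\tilde X}(0)(\log y-x)},
\]
where $\tilde X_t:=X_t-qt$ is itself a spectrally negative L\'evy process with exponent $\psi_{\tilde X}(\theta)=\psi(\theta)-q\theta$; since $\psi_{\tilde X}(1)=\psi(1)-q<0$ and $\psi_{\tilde X}$ is convex with $\psi_{\tilde X}(0)=0$, one gets $\Phi_{\tilde X}(0)>1$ and the tail integral converges. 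For the $\limsup$, the paper picks $\delta>0$ with $q-\delta>\psi(1)$ and applies the first part at level $q-\delta$, writing $e^{-qt+\overline X_t}=e^{-\delta t}\cdot e^{-(q-\delta)t+\overline X_t}$ with the second factor a.s.\ bounded. Compared with your martingale/Doob/blocking argument, the paper's approach leans on the explicit first-passage identity for spectrally negative processes and is marginally shorter; your approach is softer (it uses only that $\psi(2)<\infty$, which spectral negativity guarantees) and would work for any L\'evy process with two exponential moments. One small caveat on your $\limsup$ step: the strong law $X_u/u\to\psi'(0+)$ in its usual form needs $\E|X_1|<\infty$; when $\psi'(0+)=-\infty$ you should either invoke the extended version (valid since $\E X_1^+<\infty$ here), or simply note that $M_u=e^{X_u-\psi(1)u}$ is a nonnegative martingale and hence converges a.s., so $e^{-qu+X_u}=e^{-\beta u}M_u\to0$ directly. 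The paper's $\delta$-trick sidesteps this issue entirely.
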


\begin{proof}[Proof of Lemma~\ref{integrability}]
We want to show that
\begin{equation}
\int_0^\infty \P_{x,s}\bigg[\sup_{0\leq t <\infty}e^{-qt+\overline X_t}>y\bigg]\,dy<\infty.\label{integral}
\end{equation}
First note that it is enough to consider the above integral over the interval $(e^s,\infty)$, since for $y<e^s$ the probability inside the integral is equal to one. Next, for $y>e^s$ define $\gamma=\log(y)-x>0$ and write
\begin{eqnarray*}
 &&\P_{x,s}\bigg[\sup_{0\leq t <\infty}e^{-qt+\overline X_t}>y\bigg]\\
 &&=\P\bigg[\sup_{0\leq t <\infty}\bigg(\bigg(\sup_{0\leq u\leq t}X_u\vee (s-x)\bigg)-\gamma-qt\bigg)>0\bigg]\\
&&\leq\P[X_t-qt>\gamma\text{ for some }t]
\end{eqnarray*}
The last expression is the probability that the spectrally negative L\'evy process $\tilde X_t:=X_t-qt$, with Laplace exponent $\psi_{\tilde X}(\theta)=\psi(\theta)-q\theta$, reaches \mbox{level $\gamma$}. Thus,
\begin{equation*}
\P_{x,s}\bigg[\sup_{0\leq t <\infty}e^{-qt+\overline X_t}>y\bigg]\leq e^{-\Phi_{\tilde X}(0)\gamma}=e^{\Phi_{\tilde X}(0)x}y^{-\Phi_{\tilde X}(0)},
\end{equation*}
where $\Phi_{\tilde X}$ is the right-inverse of $\psi_{\tilde X}$. Hence, the integral~\eqref{integral} converges provided $\Phi_{\tilde X}(0)>1$. The latter is indeed satisfied because $\psi_{\tilde X}$ is convex and $\psi_{\tilde X}(1)=\psi(1)-q<0$ by assumption.\\
\indent As for the second assertion, let $\delta>0$ such that $q-\delta>\psi(1)$. By the first part we may now, for $(x,s)\in E$, infer that $\sup_{0\leq t<\infty}e^{-(q-\delta)t+\overline X_t}<\infty$ $\P_{x,s}$-a.s. and hence
\begin{equation}
\limsup_{t\to\infty}e^{-qt+\overline X_t}=\limsup_{t\to\infty}e^{-\delta t}e^{-(q-\delta)t+\overline X_t}=0.
\end{equation}
This completes the proof.
\end{proof}

\section{An excursion theoretic calculation}\label{exc_calc}
Our aim is to compute the value $\E_{s,s}\big[e^{-q\tau^*_\epsilon}\big(e^{\overline X_{\tau^*_\epsilon}\wedge\epsilon}-K\big)\big]$ for $s\in[\log(K),\epsilon)$ with the help of excursion theory (see Remark~\ref{exc_comp}). We shall spend a moment setting up some necessary notation. In doing so, we closely follow p.221--223 in~\cite{exitproblems} and refer the reader to Chapters 6 and 7 in~\citep{bertoin_book} for background reading.
The process $L_t:=\overline X_t$ serves as local time at $0$ for the Markov process $\overline X-X$ under $\P_{0,0}$. Write $L^{-1}:=\{L^{-1}_t:t\geq 0\}$ for the right-continuous inverse of $L$.
The Poisson point process of excursions indexed by local time shall be denoted by $\{(t,\varepsilon_t):t\geq 0\}$, where
\begin{equation*}
\varepsilon_t=\{\varepsilon_t(s):=X_{L^{-1}_t}-X_{L^{-1}_{t-}+s}:0<s< L^{-1}_t-L^{-1}_{t-}\}
\end{equation*}
whenever $L^{-1}_t-L^{-1}_{t-}>0$. Accordingly, we refer to a generic excursion as $\varepsilon(\cdot)$ (or just $\varepsilon$ for short as appropriate) belonging to the space $\mathcal{E}$ of canonical excursions. The intensity measure of the process $\{(t,\varepsilon_t):t\geq 0\}$ is given by $dt\times dn$, where $n$ is a measure on the space of excursions (the excursion measure). A functional of the canonical excursion that will be of interest is $\overline\varepsilon=\sup_{s<\zeta}\varepsilon(s)$, where $\zeta(\varepsilon)=\zeta$ is the length of an excursion. A useful formula for this functional that we shall make use of is the following (cf.~\citep{kyprianou}, Equation (8.18)):
\begin{equation}
n(\overline\varepsilon>x)=\frac{W'(x)}{W(x)}\label{property_ppp_tail}
\end{equation}
provided that $x$ is not a discontinuity point in the derivative of $W$ (which is only a concern when $X$ is of bounded variation, but we have assumed that in this case $\Pi$ is atomless and hence $W$ is continuously differentiable on $(0,\infty))$. Another functional that we will also use is $\rho_a:=\inf\{s>0:\varepsilon(s)>a\}$, the first passage time above $a$ of the canonical excursion $\varepsilon$.\\
\indent We now proceed with the promised calculation involving excursion theory. First, assume that $\log(K)<\epsilon<\infty$ and $\beta = \infty$. Note that for \mbox{$\log(K)\leq s<\epsilon$},
\begin{eqnarray}
\E_{s,s}\big[e^{-q\tau^*_\epsilon}\big(e^{\overline X_{\tau^*_\epsilon}\wedge \epsilon}-K\big)\big]&=&\E_{s,s}\big[e^{-q\tau^*_\epsilon}\big(e^{\overline X_{\tau^*_\epsilon}\wedge \epsilon}-K\big)1_{\{\tau^*_\epsilon<\tau^+_\epsilon\}}\big]\label{two_terms}\\
&&+\E_{s,s}\big[e^{-q\tau^*_\epsilon}\big(e^{\overline X_{\tau^*_\epsilon}\wedge \epsilon}-K\big)1_{\{\tau^*_\epsilon=\tau^+_\epsilon\}}\big]\notag.
\end{eqnarray}
We compute the two terms on the right-hand side separately. An application of the compensation formula in the second equality and using Fubini's theorem in the third equality gives for $\log(K)\leq s< \epsilon$,
\begin{eqnarray*}
&&\E_{s,s}\big[e^{-q\tau^*_\epsilon}\big(e^{\overline X_{\tau^*_\epsilon}\wedge \epsilon}-K\big)1_{\{\tau^*_\epsilon<\tau^+_\epsilon\}}\big]\notag\\
&&=\E\Bigg[\sum_{0<t<\epsilon-s}e^{-qL^{-1}_{t-}}(e^{t+s}-K)1_{\{\overline\varepsilon_{u}\leq g_\epsilon(u+s)\,\forall\,u<t\}}1_{\{\overline\varepsilon_t>g_\epsilon(t+s)\}}e^{-q\rho_{g_\epsilon(s+t)}(\varepsilon_t)}\Bigg]\notag\\
&&=\E\bigg[\int_0^{\epsilon-s}dt\,e^{-qL^{-1}_{t}}(e^{s+t}-K)1_{\{\overline\varepsilon_u\leq g_\epsilon(u+s)\,\forall\,u<t\}}\int_{\mathcal{E}}1_{\{\overline\varepsilon>g_\epsilon(t+s)\}}e^{-q\rho_{g_\epsilon(s+t)}(\varepsilon)}n(d\varepsilon)\bigg]\\
&&=\int_0^{\epsilon-s} (e^{s+t}-K)e^{-\Phi(q)t}\E\Big[e^{-qL^{-1}_t+\Phi(q)t}1_{\{\overline\varepsilon_u\leq g_\epsilon(u+s)\,\forall\,u<t\}}\Big]\hat f(g_\epsilon(t+s))\,dt,\notag
\end{eqnarray*}
where in the first equality the time index runs over local times and the sum is the usual shorthand for integration with respect to the Poisson counting measure of excursions, and $\hat f(u)=\Y{q}{u}{u}{u}{u}$ is an expression taken from Theorem 1 in~\citep{exitproblems}. Next, note that $L^{-1}_t$ is a stopping time and hence a change of measure according to~\eqref{changeofmeasure} shows that the expectation inside the integral can be written as
\begin{equation*}
\P^{\Phi(q)}\big[\overline\varepsilon_u\leq g_\epsilon(u+s)\text{ for all }u<t\big].
\end{equation*}
Using the properties of the Poisson point process of excursions (indexed by local time) and with the help of~\eqref{property_ppp_tail} and~\eqref{scale5} we may deduce
\begin{eqnarray*}
&&\P^{\Phi(q)}\big[\overline\varepsilon_u\leq g_\epsilon(u+s)\text{ for all }u<t\big]\\
&&=\exp\bigg(-\int_0^tn_{\Phi(q)}(\overline\varepsilon>g_\epsilon(u+s))\,du\bigg)\\
&&=\exp\bigg(\Phi(q)t-\int_0^t\frac{\dW{q}{g_\epsilon(u+s)}}{\W{q}{g_\epsilon(u+s)}}\,du\bigg),
\end{eqnarray*}
where $n_{\Phi(q)}$ denotes the excursion measure associated with $X$ under $\P^{\Phi(q)}$. By a change of variables  we finally get for $\log(K)\leq s< \epsilon$,
\begin{eqnarray*}
&&\E_{s,s}\big[e^{-q\tau^*_\epsilon}\big(e^{\overline X_{\tau^*_\epsilon}\wedge \epsilon}-K\big)1_{\{\tau^*_\epsilon<\tau^+_\epsilon\}}\big]\\
&&=\int_s^{\epsilon} (e^t-K)\hat f(g_\epsilon(t))\exp\bigg(-\int_s^t\frac{\dW{q}{g_\epsilon(u)}}{\W{q}{g_\epsilon(u)}}\,du\bigg)\,dt.
\end{eqnarray*}
As for the second term in~\eqref{two_terms}, similarly to the computation of the first term, we obtain for $\log(K)\leq s<\epsilon$,
\begin{eqnarray*}
&&\E_{s,s}\big[e^{-q\tau^*_\epsilon}\big(e^{\overline X_{\tau^*_\epsilon}\wedge\epsilon}-K\big)1_{\{\tau^*_\epsilon=\tau^+_\epsilon\}}\big]\\
&&=(e^\epsilon-K)\E\Big[e^{-qL^{-1}_{\epsilon-s}}1_{\{\overline\varepsilon_t\leq g_\epsilon(t+s)\,\forall\,t<\epsilon-s\}}\Big]\\
&&=(e^\epsilon-K)e^{-\Phi(q)(\epsilon-s)}\P^{\Phi(q)}\big[\overline\varepsilon_t\leq g_\epsilon(t+s)\,\forall\,t<\epsilon-s\big]\\
&&=(e^\epsilon-K)\exp\bigg(-\int_s^\epsilon\frac{\dW{q}{g_\epsilon(u)}}{\W{q}{g_\epsilon(u)}}\,du\bigg).
\end{eqnarray*}
Adding the two terms up gives the expression in Remark~\ref{exc_comp}.

In the case that $\epsilon = \beta = \infty$  the second term on the right hand side of (\ref{two_terms}) is not needed. In the case that $\beta = \log\big(K(1-\mathtt{d}/q)^{-1}\big)<\epsilon$, the cap $\epsilon$ may effectively be replaced by $\beta$ in (\ref{two_terms}).


\begin{thebibliography}{9}

\bibitem{some_remarks}
\textsc{Alili, L.} and \textsc{Kyprianou, A. E.} (2005).
Some Remarks of First Passage of L\'evy Processes, the American Put and Pasting Principles.
\textit{Ann. Appl. Probab.}
\textbf{15} 2062--2080.

\bibitem{exitproblems}
\textsc{Avram, F., Kyprianou, A.E.} and \textsc{Pistorius, M.R.} (2004).
Exit problems for spectrally negative L\'evy processes and applications to (Canadized) Russian options.
\textit{Ann. Appl. Probab.}
\textbf{14} 215--238. 

\bibitem{bichteler}
\textsc{Bichteler, K.} (2002).
\textit{Stochastic Integration with Jumps}.
Cambridge University Press.

\bibitem{bertoin_book}
\textsc{Bertoin, J.} (1996).
\textit{L\'evy Porcesses}.
Cambridge University Press.

\bibitem{carr}
\textsc{Carr, P.} and \textsc{Wu, L.} (2003).
The finite moment log stable process and option pricing.
\textit{J. Finance}
\textbf{58} 753--778.

\bibitem{chan}
\textsc{Chan, T.} (1999).
Pricing Contingent Claims on Stocks driven by L\'evy Processes.
\textit{Ann. Appl. Probab.}
\textbf{9} 504--528.

\bibitem{cox}
\textsc{Cox, A. M. G., Hobson, D.,} and \textsc{Ob\l\'oj} (2008).
Pathwise inequalities for local time: Applications to Skorohod embeddings and optimal stopping.
\textit{Ann. Appl. Probab.}
\textbf{18} 1870--1896.

\bibitem{gapeev}
\textsc{Gapeev, P.V.} (2007).
Discounted optimal stopping for maxima of some jump-diffusion processes.
\textit{J. Appl. Prob.}
\textbf{44} 713--731.

\bibitem{graversen_peskir}
\textsc{Graversen, S. E.,} and \textsc{Peskir, G.} (1998).
Optimal stopping and maximal inequalities for geometric Brownian motion.
\textit{J. Appl. Prob.}
\textbf{35} 856--872.

\bibitem{guo_shepp}
\textsc{Guo, X.} and \textsc{Shepp, L.} (2001).
Some optimal stopping problems with nontrivial boundaries for pricing exotic options.
\textit{J. Appl. Prob.}
\textbf{38} 647--658.

\bibitem{hartman} 
\textsc{Hartman, P.} (1982).
\textit{Ordinary Differential Equations}.
Birk\"auser.

\bibitem{KuzKypRiv}
\textsc{Kuznetsov, A., Kyprianou, A. E.} and \textsc{Rivero, V.} (2011).
The Theory of Scale Functions for Spectrally Negative L\'evy Processes.
\texttt{arXiv:1104.1280v1 [math.PR]}

\bibitem{kyprianou}
\textsc{Kyprianou, A.E.} (2006).
\textit{Introductory lectures on fluctuations of L\'evy processes with applications}.
Springer, Berlin.

\bibitem{madan}
\textsc{Madan, D. B.} and \textsc{Schoutens, W.} (2008).
Break on through to the single side.
\textit{J. of Credit Risk}
Sept. issue.

\bibitem{merton}
\textsc{Merton, R. C.} (1969).
Lifetime portfolio selection under uncertainty: The continuous-time case.
\textit{Rev. Econ. Stat.}
\textbf{1} 247--257.

\bibitem{mikhalevich}
\textsc{Mikalevich, V. S.} (1958).
Baysian choice between two hypotheses for the mean value of a normal process.
\textit{Visn. Kiiv. Univ. Ser. Fiz.-Mat. Nauki}
\textbf{1} 101--104.

\bibitem{maximum_process}
\textsc{Ott, C.} (2011)
Optimal stopping problems for the maximum process with upper and lower caps.
\texttt{arXiv:1107.0233 [math.PR]}

\bibitem{pedersen}
\textsc{Pedersen, J.L.} (2000).
Discounted optimal stopping problems for the maximum process.
\textit{J. Appl. Prob.}
\textbf{37} 972--983.

\bibitem{maximality_principle}
\textsc{Peskir, G.} (1998).
Optimal stopping of the maximum process: The maximality principle.
\textit{Ann. Probab.}
\textbf{26} 1614--1640.

\bibitem{pes_shir}
\textsc{Peskir, G.} and \textsc{Shiryaev, A} (2000).
Sequential testing problems for Poisson processes.
\textit{Ann. Statist.}.
\textbf{28} 837--859.

\bibitem{peskir}
\textsc{Peskir, G.} and \textsc{Shiryaev, A} (2006).
\textit{Optimal Stopping and Free-Boundary Problems}.
Birkhaeuser Verlag, Basel.

\bibitem{pist}
\textsc{Pistorius, M.R.} (2004).
On exit and ergodicity of the spectrally one-sided L\'evy process reflected at its infimum.
\textit{J. Theor. Prob.}
\textbf{17} 183--220.

\bibitem{protter}
\textsc{Protter, P. E.} (2005).
\textit{Stochastic Integration and Differential Equations}, 2nd ed.
Springer, Berlin.

\bibitem{russian_option}
\textsc{Shepp, L.A.} and \textsc{Shiryaev, A.N.} (1993).
The Russian Option: Reduced Regret. \textit{Ann. Appl. Probab.}
\textbf{3} 631--640.

\bibitem{a_new_look}
\textsc{Shepp, L.A.} and \textsc{Shiryaev, A.N.} (1993).
A New Look at Pricing of the "Russian Option". \textit{Theory Probab. Appl.}
\textbf{39} 103--119.

\bibitem{shiryaev}
\textsc{Shiryaev, A. N.} (1999).
\textit{Essentials of Stochastic Finance}.
World Scientific, London.

\bibitem{optimal_stopping_rules}
\textsc{Shiryaev, A. N.} (2008).
\textit{Optimal stopping rules.}
Springer, Reprint.
\end{thebibliography}
\end{document}